\newcommand{\RomanNumeralCaps}[1]
    {\MakeUppercase{\romannumeral #1}}
\theoremstyle{plain} 
\newtheorem{theorem}{Theorem}[section]
\newtheorem{lemma}[theorem]{Lemma}
\theoremstyle{definition} \newtheorem{definition}[theorem]{Definition}
\theoremstyle{remark} \newtheorem{remark}[theorem]{Remark}
\newcommand\nnfootnote[1]{%
  \begin{NoHyper}
  \renewcommand\thefootnote{}\footnote{#1}%
  \addtocounter{footnote}{-1}%
  \end{NoHyper}
}
\def\deg{{\rm deg}}
\def\Sym{{\rm Sym}}
\def\Hilb{{\rm Hilb}}
\def\Quot{{\rm Quot}}
\def\Ker{{\rm Ker}}
\def\Pic{{\rm Pic}}
\def\Div{{\rm Div}}
\newcommand{\ncom}{\newcommand}
\ncom{\mylabel}[1]{{\rm (#1)}\label{#1}}
\ncom{\Hom}{{\textit{Hom}}}
\ncom{\eop}{{\hfill $\Box$}}
\begin{document}
\baselineskip=16pt

\nnfootnote{Mathematics Classification Number: 14C05, 14C20, 14C22, 14L30.}
\nnfootnote{Keywords:  Diagonal, ind-variety, divisor, Hilbert scheme, good partition.}

\setcounter{tocdepth}{1}

\title[Diagonal property and weak point property]{Diagonal property and weak point property of higher rank divisors and certain Hilbert schemes}

\author{Arijit Mukherjee}
%\address{Department of Mathematics, Indian Institute of Science Education and Research Tirupati, Karakambadi Rd,  opp. Sree Rama Engineering College, Rami Reddy Nagar, Mangalam, Tirupati, Andhra Pradesh - 517 507, India.\\Email - mukherjee90.arijit@gmail.com\\ORCID -  0000-0002-5944-1682}
\address{Department of Mathematics, Indian Institute of Science Education and Research Tirupati, Karakambadi Rd,  Opp Sree Rama Engineering College, Rami Reddy Nagar, Mangalam, Tirupati, Andhra Pradesh - 517 507, India.}
\email{mukherjee90.arijit@gmail.com}

\begin{abstract}
In this paper, we introduce the notion of the diagonal property and the weak point property for an ind-variety.  We prove that the ind-varieties of higher rank divisors of integral slopes on a smooth projective curve have the weak point property.  Moreover, we show that the ind-variety of $(1,n)$-divisors has the diagonal property.  Furthermore, we obtain that the Hilbert schemes associated to the good partitions of a constant polynomial satisfy the diagonal property.  On the process of obtaining this, we provide an upper bound on the number of such Hilbert schemes upto isomorphism.  Furthermore, we prove that the obtained upper bound is attained in case of genus zero curves and hence conclude that the bound is sharp.   
\end{abstract}
\maketitle

\tableofcontents

\section{Introduction}
\label{sec:Introduction}
Let $X$ be a smooth projective variety over the field of complex numbers.  By the diagonal subscheme of $X$, denoted by $\Delta_X$, one means the image of the embedding $\delta: X\rightarrow X\times X$ given by $\delta(x)=(x,x)$, where $x\in X$.  This subscheme plays a central role in intersection theory.  In fact, to get hold of the fundamental classes of any subschemes of a variety $X$, it's enough to know the fundamental class of the diagonal $\Delta_X$ of $X$, (cf. \cite{P}).  
 
In this paper, we talk about the diagonal property and the weak point property of some varieties.  Broadly speaking, the diagonal property of a variety $X$ is a property which demands a special structure of the diagonal $\Delta_X$ and therefore very significant to study from the viewpoint of intersection theory.  Moreover, being directly related to the diagonal subscheme $\Delta_X$, this property imposes strong conditions on the variety $X$ itself.  For example, this property is responsible for the existence or non-existence of cohomologically trivial line bundles on $X$.  The weak point property is also very much similar to diagonal property but a much weaker one.  Both of these notions were introduced in \cite{PSP}.  Many mathematicians have studied about the diagonal property and the weak point property of varieties, (cf. \cite{F}, \cite{FP}, \cite{LS}).  In this paper, we introduce these two notions for an ind-variety, that is an inductive system of varieties and showed that the ind-varieties of higher rank divisors of integral slope on a smooth projective curve $C$  satisfy these properties.  Also, we show that some Hilbert schemes associated to good partitions of a polynomial satisfy the diagonal property.

Before mentioning the results obtained in this paper more specifically, let us fix some notations which we are going to use repeatedly.  We denote by $\mathbb{C}$ the field of complex numbers.  In this paper, by $C$ we always mean a smooth projective curve over $\mathbb{C}$. The notation $\mathcal{O}_C$ is reserved for the structure sheaf over $C$.  For a given divisor $D$ on $C$, by $\mathcal{O}_C(D)$ we mean the corresponding line bundle over $C$ and denote its degree by $\deg(D)$.  By $\Sym^d(C)$ we denote the $d$-th symmetric power of the curve $C$. For a given positive integer $n$ and a locally free sheaf (equivalently, a vector bundle) $\mathcal{F}$ over $C$, by $\mathcal{F}^n$ we mean the direct sum of $n$ many copies of $\mathcal{F}$.  By $\Quot^{d}_{\mathcal{G}}$ we denote the Quot scheme parametrizing all torsion quotients of $\mathcal{G}$ having degree $d$, $\mathcal{G}$ being any coherent sheaf on $C$.  For a given polynomial $P(t)\in \mathbb{Q}[t]$, we denote the Quot scheme parametrizing all torsion quotients of $\mathcal{G}$ having Hilbert polynomial $P(t)$ by $\Quot^{P}_{\mathcal{G}}$.
 
Let us now go through the chronology of this paper in a bit more detail.  The manuscript is arranged as follows.  In Section \ref{sec: dp and wpp}, we recall the definitions of the  diagonal property and the weak point property for a smooth projective variety and talk about a relation between these two properties.  Moreover, for a smooth projective curve $C$ over $\mathbb{C}$, we recall a couple of relevant results about the variety $\Sym^d(C)$ and the Quot scheme $\Quot^{d}_{\mathcal{O}_C^n}$.  In Section \ref{sec: Higher rank divisors}, we recall the definition of $(r,n)$-divisors on $C$ \& the ind-variety made out of such divisors.  We then precisely define, what we mean by the diagonal property and the weak point property of an ind-variety and prove the following theorems followed by that.
\begin{theorem}
Let $C$ be a smooth projective curve over $\mathbb{C}$.  Also let $r\geq 1$ and $n$ be two integers.  Then the ind-variety of $(r,n)$-divisors having integral slope on $C$ has the weak point property. 
\end{theorem}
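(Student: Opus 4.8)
The plan is to realize the ind-variety of $(r,n)$-divisors of integral slope as a direct limit $\varinjlim_{k}\mathcal{D}_{k}$, where $\mathcal{D}_{k}$ is the smooth projective Quot-type scheme of $(r,n)$-divisors of slope $k$ and the transition maps $\iota_{k}\colon\mathcal{D}_{k}\hookrightarrow\mathcal{D}_{k+1}$ are the closed immersions given by adjoining a fixed slope-one divisor --- it is here that integrality of the slope enters, since it is what makes this cofinal system of transition maps available and keeps every $\mathcal{D}_{k}$ in the range where it is smooth of the expected dimension, as in the facts recalled in Section~\ref{sec: dp and wpp}. By the definition of the weak point property for an ind-variety given in Section~\ref{sec: Higher rank divisors}, it then suffices to produce, on each $\mathcal{D}_{k}$ (or on a cofinal subsystem of them), a vector bundle $\mathcal{E}_{k}$ of rank $\dim\mathcal{D}_{k}$ together with a section whose zero scheme is a single reduced point, these data being compatible with the maps $\iota_{k}$.

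Fix $k$, write $p,p_{C}$ for the two projections of $\mathcal{D}_{k}\times C$, and let $0\to\mathcal{F}\to p_{C}^{*}\mathcal{O}_{C}^{n}\to\mathcal{Q}\to 0$ be the universal short exact sequence defining $(r,n)$-divisors, so that $\mathcal{F}$ is flat over $\mathcal{D}_{k}$ and locally free along $C$. Choose a sufficiently general $(r,n)$-divisor $D_{0}\in\mathcal{D}_{k}$ --- one supported on distinct points of $C$ with generic local type --- with underlying subsheaf $\mathcal{F}_{0}\subset\mathcal{O}_{C}^{n}$ and quotient $Q_{0}=\mathcal{O}_{C}^{n}/\mathcal{F}_{0}$. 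Composing the universal inclusion with this fixed quotient produces a morphism of sheaves on $\mathcal{D}_{k}\times C$,
\[
\Phi\colon \mathcal{F}\hookrightarrow p_{C}^{*}\mathcal{O}_{C}^{n}\twoheadrightarrow p_{C}^{*}Q_{0},
\]
and I set $\mathcal{E}_{k}:=p_{*}\mathcal{H}om(\mathcal{F},p_{C}^{*}Q_{0})$ and $s_{k}:=p_{*}\Phi\in H^{0}(\mathcal{D}_{k},\mathcal{E}_{k})$. Since $Q_{0}$ is cohomologically tame (torsion, or of large degree, according to the numerics of $(r,n)$ and $k$), one has $R^{1}p_{*}\mathcal{H}om(\mathcal{F},p_{C}^{*}Q_{0})=0$, so $\mathcal{E}_{k}$ is a vector bundle; its rank is $\hom(\mathcal{F}_{0},Q_{0})$, which by the deformation theory of $\mathcal{D}_{k}$ equals $\dim\mathcal{D}_{k}$. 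This mirrors, for subsheaves of $\mathcal{O}_{C}^{n}$, the elementary construction recalled for $\Sym^{d}C$ in Section~\ref{sec: dp and wpp}.

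It remains to identify $Z(s_{k})$. At a point $[\mathcal{F}']\in\mathcal{D}_{k}$ the section $s_{k}$ is the composite $\mathcal{F}'\hookrightarrow\mathcal{O}_{C}^{n}\twoheadrightarrow Q_{0}$, which vanishes precisely when $\mathcal{F}'\subseteq\Ker(\mathcal{O}_{C}^{n}\to Q_{0})=\mathcal{F}_{0}$; since $\mathcal{F}'$ and $\mathcal{F}_{0}$ are subsheaves of $\mathcal{O}_{C}^{n}$ of the same rank and the same degree, this inclusion forces $\mathcal{F}'=\mathcal{F}_{0}$, so $s_{k}$ vanishes set-theoretically only at $D_{0}$. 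For the scheme structure, under the identification of both $T_{D_{0}}\mathcal{D}_{k}$ and $\mathcal{E}_{k}|_{D_{0}}$ with $\Hom(\mathcal{F}_{0},Q_{0})$, the differential of $s_{k}$ at $D_{0}$ sends a first-order deformation of $\mathcal{F}_{0}$, that is a homomorphism $\mathcal{F}_{0}\to\mathcal{O}_{C}^{n}/\mathcal{F}_{0}=Q_{0}$, to itself, hence is an isomorphism; so $Z(s_{k})$ is a single reduced point and $\mathcal{D}_{k}$ has the weak point property. Finally, choosing $D_{0}$ at level $k+1$ to be the image of $D_{0}$ at level $k$ under $\iota_{k}$ and tracking $\mathcal{F}$ and $\Phi$ through $\iota_{k}$, one checks that the pairs $(\mathcal{E}_{k},s_{k})$ assemble into the compatible system required by the definition of the property for ind-varieties, which completes the proof.

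The main obstacle, I expect, is not the zero-locus computation, which is formal once the set-up is fixed, but the foundational input: establishing that the $\mathcal{D}_{k}$ are smooth projective of the expected dimension with a well-behaved universal subsheaf $\mathcal{F}$ --- so that $\mathrm{rk}\,\mathcal{E}_{k}=\dim\mathcal{D}_{k}$ and the differential computation are legitimate (on the component containing $D_{0}$, which for the relevant $k$ should be all of $\mathcal{D}_{k}$) --- and pinpointing exactly where the integrality of the slope is used for this: it governs which degrees occur, it guarantees that a general $D_{0}$ with distinct support and generic local type actually exists in $\mathcal{D}_{k}$, and it makes the transition maps $\iota_{k}$, hence the ind-structure, well defined. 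The remaining delicate point is to match the member-wise data $(\mathcal{E}_{k},s_{k})$ with the exact compatibility demanded by the definition in Section~\ref{sec: Higher rank divisors}; depending on how rigid that definition is, one may have to normalise the choices along the system and verify that $\iota_{k}^{*}\mathcal{E}_{k+1}$ agrees with $\mathcal{E}_{k}$ up to a trivial summand with matching section.
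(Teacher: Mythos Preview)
Your core construction on a single Quot scheme is correct and amounts to a direct reproof of the Biswas--Singh result cited as Theorem~\ref{Quot scheme_wpp}: for $\mathcal{D}=\Quot^{d}_{\mathcal{O}_C^{r}}$, choosing any closed point with quotient $Q_0$ and setting $\mathcal{E}=p_*\mathcal{H}om(\mathcal{F},p_C^{*}Q_0)$ does give a rank-$rd$ bundle with a section vanishing transversally at that single point. The paper, by contrast, simply invokes Theorem~\ref{Quot scheme_wpp} as a black box and checks its hypothesis, so your route is genuinely different in that it unpacks the cited result rather than citing it.

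That said, two confusions in your framing should be fixed. First, you have misread Definition~\ref{indvariety_dp}: the weak point property for an ind-variety requires only that each constituent $X_\lambda$ individually have the property for all $\lambda\ge\lambda_0$, with \emph{no} compatibility of bundles or sections across the system. Your entire discussion of matching $(\mathcal{E}_k,s_k)$ along $\iota_k$, and of $\iota_k^{*}\mathcal{E}_{k+1}$ agreeing with $\mathcal{E}_k$ up to a trivial summand, is unnecessary. Second, your indexing does not match the paper's: the ind-variety $\mathbf{Q}^{r,-n}$ is indexed by effective divisors $D$ on $C$, with constituents $Q^{r,-n}(D)=\Quot^{r\deg(D)-n}_{\mathcal{O}_C^{r}}$ (Definition~\ref{Quot schemes as constituent of ind-variety}); the slope $n/r$ is fixed throughout and does not vary over the system, so the phrase ``$\mathcal{D}_k$ of slope $k$'' and the transition maps you describe are not the ones in play.

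Finally, you misidentify the role of integrality of the slope. In the paper's argument, writing $n=rk$ makes each constituent equal to $\Quot^{r(\deg(D)-k)}_{\mathcal{O}_C^{r}}$, a Quot scheme whose degree is divisible by $r$, which is precisely the hypothesis of Theorem~\ref{Quot scheme_wpp} (see Remark~\ref{positivity required in the hypothesis}). It has nothing to do with the existence of transition maps, smoothness, or of a generic $D_0$. In fact your own construction uses no divisibility at all---it works for any positive $d$---so once the framing is corrected you are actually proving something slightly stronger than the theorem as stated.
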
 
\begin{theorem}
Let $C$ be a smooth projective curve over $\mathbb{C}$ and $n$ any given integer. Then the ind-variety of $(1,n)$-divisors on $C$ has the diagonal property.
\end{theorem}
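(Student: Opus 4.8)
Write the ind-variety in question as $\varinjlim_{d}\mathcal D_d$, where $\mathcal D_d$ is the variety of $(1,n)$-divisors of degree $d$ on $C$ and the structure maps $\iota_d\colon\mathcal D_d\hookrightarrow\mathcal D_{d+1}$ are the closed immersions given by adding a fixed reduced point $p_0\in C$ (equivalently, tensoring the defining data by $\mathcal O_C(p_0)$), which covers the translation $L\mapsto L(p_0)$ on the Picard variety. In accordance with the definition of the diagonal property for an ind-variety given in Section~\ref{sec: Higher rank divisors}, the plan is to produce, for all $d$ large enough, a vector bundle $E_d$ of rank $\dim\mathcal D_d$ on $\mathcal D_d\times\mathcal D_d$ together with a section $s_d$ whose scheme-theoretic zero locus is the reduced diagonal $\Delta_{\mathcal D_d}$, in such a way that the pairs $(E_d,s_d)$ are compatible with the maps $\iota_d\times\iota_d$ in the sense required there.

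The first step is to invoke the description of $\mathcal D_d$ recalled earlier, which exhibits it --- for $d$ sufficiently large --- as a projective bundle $\pi_d\colon\mathcal D_d=\mathbb P(\mathcal V_d)\to\Pic^{d}(C)$ over the Picard variety (for $n=1$ this is the Abel--Jacobi presentation of $\Sym^{d}(C)$ as a $\mathbb P^{d-g}$-bundle over $\Pic^{d}(C)$). Since $\Pic^{d}(C)$ is an abelian variety it has the diagonal property, and this property is inherited by projective bundles over it (cf.\ the results recalled in Section~\ref{sec: dp and wpp} and \cite{F},\,\cite{FP}); the underlying construction, which I would set up explicitly here, realizes $\Delta_{\mathcal D_d}$ as the zero scheme of the direct sum of (i) the pullback under $\pi_d\times\pi_d$ of a fixed rank-$g$ bundle-with-section on $\Pic^{d}(C)\times\Pic^{d}(C)$ cutting out the diagonal of the abelian variety, and (ii) the tautological comparison section, on the fiber product $\mathbb P(\mathcal V_d)\times_{\Pic^{d}(C)}\mathbb P(\mathcal V_d)$, of the tautological sub-line bundle of the first factor with the tautological quotient bundle of the second --- the latter being suitably extended to all of $\mathcal D_d\times\mathcal D_d$. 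This $E_d$ has rank $g+(\mathrm{rk}\,\mathcal V_d-1)=\dim\mathcal D_d$.

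The heart of the proof is to see that these data propagate compatibly up the inductive system. Two facts make this possible. First, the immersion $\iota_d$ covers the canonical translation isomorphism $\Pic^{d}(C)\xrightarrow{\ \sim\ }\Pic^{d+1}(C)$, so that choosing the bundle-with-section for the diagonal of the abelian variety once and transporting it by these translations makes part (i) automatically $\iota_d$-compatible. Second, multiplication with the canonical section of $\mathcal O_C(p_0)$ realizes $\mathcal V_d$ as a corank-one \emph{sub-bundle} of (the translate of) $\mathcal V_{d+1}$, hence $\iota_d$ as a fiberwise-linear embedding with $\iota_d^{*}\mathcal O_{\mathcal D_{d+1}}(1)\cong\mathcal O_{\mathcal D_d}(1)$; tracking the tautological bundles through this shows that the restriction of the section in (ii) along $\iota_d\times\iota_d$ again lands in the corresponding sub-bundle, equals the section in (ii) at level $d$, and has unchanged (reduced) zero scheme, the relative diagonal $\Delta^{\mathrm{rel}}_{\mathcal D_d}$. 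Putting (i) and (ii) together one gets that $(\iota_d\times\iota_d)^{*}E_{d+1}$ contains $E_d$ as a sub-bundle carrying $(\iota_d\times\iota_d)^{*}s_{d+1}$ to $s_d$, with $(\iota_d\times\iota_d)^{*}s_{d+1}$ still cutting out exactly $\Delta_{\mathcal D_d}$ --- precisely the compatibility demanded by the definition. Finally, since the ind-variety is unchanged if the system is reindexed to start at any large $d$, the possible failure of the projective bundle description for small $d$ causes no difficulty, and the theorem follows.

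The step I expect to be the real obstacle is the second part of the previous paragraph: matching the constructed pairs $(E_d,s_d)$ to the precise compatibility condition of the definition, which requires keeping careful track of the (in general non-split) extension $0\to E_d\to(\iota_d\times\iota_d)^{*}E_{d+1}\to(\text{line bundle})\to 0$ produced by the sub-bundle $\mathcal V_d\subseteq\mathcal V_{d+1}$, checking that the tautological section genuinely restricts to $s_d$ rather than acquiring spurious vanishing, and verifying that the extension of the relative tautological section from the fiber product to the whole of $\mathcal D_d\times\mathcal D_d$ can be chosen coherently for all $d$ at once (e.g.\ by twisting by a sufficiently positive line bundle pulled back from the base). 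These are the places where the argument must be carried out by hand rather than by citation.
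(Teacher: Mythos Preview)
You have over-read the definition. In the paper (Definition~\ref{indvariety_dp}), an ind-variety $\{X_\lambda,f_{\lambda\mu}\}$ has the diagonal property simply if there exists $\lambda_0$ such that every individual variety $X_\lambda$ with $\lambda\geq\lambda_0$ has the diagonal property in the ordinary sense. No compatibility of the bundles $E_\lambda$ or sections $s_\lambda$ under the transition maps is required. Consequently the paper's proof is a two-line argument: for an effective divisor $D$ of degree $d>n$ one has $Q^{1,-n}(D)=\Quot^{\,d-n}_{\mathcal{O}_C}\cong\Sym^{\,d-n}(C)$, and by the Biswas--Singh result (Theorem~\ref{symm prod_dp}) every symmetric product of $C$ has the diagonal property; done. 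All of your compatibility analysis --- the sub-bundle $\mathcal V_d\subset\mathcal V_{d+1}$, the restriction of tautological sections, the coherent choice of extension to $\mathcal D_d\times\mathcal D_d$ --- is therefore unnecessary for the theorem as stated.

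That said, your sketch does contain the skeleton of a genuinely different proof that $\Sym^d(C)$ itself has the diagonal property for $d\geq 2g-1$, via the Abel--Jacobi presentation as a projective bundle over the Jacobian. Two cautions if you pursue that route: first, the assertion ``$\Pic^d(C)$ is an abelian variety, hence has the diagonal property'' is not a valid inference in general (the diagonal property is open for arbitrary abelian varieties, cf.\ \cite{D}); you need the specific fact, due to \cite{PSP}, that \emph{Jacobians} have it. Second, your indexing is slightly off: the $(1,n)$-divisors all have degree $n$ by definition, and the ind-system is indexed by effective divisors $D$ on $C$ (with $\Div^{1,n}(D)\cong\Sym^{\deg(D)-n}(C)$), not by a separate degree parameter for the divisors themselves. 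Neither point is fatal, but the first should be stated correctly.
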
           
In Section \ref{sec: Hilbert scheme}, we deal with the Hilbert scheme associated to a polynomial $P$ and its good partition.  E.~Bifet has dealt with these schemes in \cite{B}.  Moreover, he showed that the Quot scheme $\Quot_{\mathcal{O}_C^r}^P$ can be written as disjoint union of some smooth, the torus $\mathbb{G}_m^r$-invariant, locally closed vector bundles over the mentioned Hilbert schemes.  Here, we talk about the diagonal property of such Hilbert schemes.  Towards that, we prove the following lemma.
\begin{lemma}
Let $n$ be a given positive integer.  Then any partition of $n$ is also a good partition of $n$ and vice versa.
\end{lemma}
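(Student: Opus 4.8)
The plan is to unwind the definition of a good partition in the special case where the polynomial under consideration is the constant polynomial $n$, and to match the resulting data with an ordinary integer partition of $n$. Recall that a good partition of a polynomial $P$ is an unordered collection $\{P_1,\dots,P_k\}$ of Hilbert polynomials of non-zero coherent quotients of $\mathcal{O}_C$ with $\sum_{i=1}^{k}P_i=P$. So I must establish two things: (i) if $n=\lambda_1+\cdots+\lambda_k$ is a partition into positive integers, then $\{\lambda_1,\dots,\lambda_k\}$ is a good partition of the constant polynomial $n$; and (ii) conversely, every good partition of the constant polynomial $n$ arises this way.

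For (i), fix $\lambda_i\geq 1$, choose $\lambda_i$ distinct closed points of $C$ (possible since a curve over $\mathbb{C}$ has infinitely many closed points), and let $Z_i\subset C$ be the associated reduced zero-dimensional subscheme. Then $\mathcal{O}_{Z_i}$ is a non-zero torsion quotient of $\mathcal{O}_C$ with constant Hilbert polynomial $\lambda_i=\dim_{\mathbb{C}}H^{0}(C,\mathcal{O}_{Z_i})$, and the relation $\sum_i\lambda_i=n$ then exhibits $\{\lambda_1,\dots,\lambda_k\}$ as a good partition of $n$.

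For (ii), let $\{P_1,\dots,P_k\}$ be a good partition of $n$, with $P_i$ the Hilbert polynomial of a non-zero quotient $\mathcal{Q}_i$ of $\mathcal{O}_C$. Since $\mathcal{O}_C$ is torsion-free of rank one on the smooth projective curve $C$, every non-zero subsheaf has rank one, so any proper quotient $\mathcal{Q}_i$ is torsion; hence $P_i$ is the constant $\dim_{\mathbb{C}}H^{0}(C,\mathcal{Q}_i)=:\lambda_i\geq 1$. Then $\sum_i\lambda_i=n$ with all $\lambda_i\geq 1$, which is precisely the data of a partition of $n$. (If one reads the definition of good partition so as to allow also $\mathcal{Q}_i=\mathcal{O}_C$, it suffices to add that its Hilbert polynomial is linear with strictly positive leading coefficient and therefore cannot occur in a collection summing to the constant $n$.)

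I do not anticipate a genuine obstacle: the lemma is essentially bookkeeping. The one point that warrants a sentence rather than nothing is the structural fact that a non-zero quotient of $\mathcal{O}_C$ on a smooth projective curve is torsion, equivalently has constant Hilbert polynomial, unless it is $\mathcal{O}_C$ itself. Its role is to allow us to replace, throughout Section~\ref{sec: Hilbert scheme}, the good partitions of the constant polynomial $n$ by the combinatorially transparent set of integer partitions of $n$, in terms of which the subsequent enumeration and sharpness statements are naturally phrased.
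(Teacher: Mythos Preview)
Your argument is correct and follows essentially the same route as the paper: both directions amount to identifying which Hilbert schemes $\Hilb_C^{P}$ on the curve are non-empty. Two minor remarks. First, the paper's Definition~\ref{gp_defn} states the condition as $\Hilb_C^{P_i}\neq\emptyset$, not as ``$P_i$ is the Hilbert polynomial of a non-zero coherent quotient of $\mathcal{O}_C$''; on a curve these are equivalent (quotients of $\mathcal{O}_C$ are structure sheaves of closed subschemes), but you are reformulating rather than recalling, and the paper's tuple $(P_i)_{i=1}^s$ is ordered. Second, your converse is in fact more complete than the paper's: the paper only invokes ``$\Hilb^k_C\neq\emptyset$ iff $k>0$'' for \emph{integers} $k$, tacitly assuming each $P_i$ is already a constant, whereas you supply the sentence explaining why a non-constant $P_i$ (the Hilbert polynomial of $C$ itself) cannot appear in a collection summing to the constant $n$.
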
  
In the lemma stated above, we interpret the integer $n$ as a constant polynomial and therefore it makes sense to talk about good partition of $n$.  Using this, we finally prove the following theorem, which not only provides some Hilbert scheme associated to good partitions of a polynomial but also gives a sharp upper number of such Hilbert schemes. Precisely, we obtain:
\begin{theorem}
Let $C$ be a smooth projective curve over $\mathbb{C}$ and $n$ a positive integer.  Then there are at most as many Hilbert schemes $\Hilb^{\underline{n}}_C$ (upto isomorphism) associated to the constant polynomial $n$ and its good partitions $\underline{n}$ satisfying diagonal property as there are conjugacy classes of the symmetric group $S_n$ of $n$ symbols.  Moreover, this upper bound is achieved when $C$ is of genus $0$ and hence is sharp.
\end{theorem} 

Here to prove the sharpness of the obtained upper bound, we show that for genus $0$ curves the upper bound is attained, mainly using the following lemma.
\begin{lemma}  
Let $n$ be a positive integer.  Let $(m_1,m_2,\ldots,m_s)$ and $(n_1,n_2,\ldots,n_s)$ be two distinct partitions of $n$ of same length $s$.  Then $\mathbb{P}^{m_1}\times 
\mathbb{P}^{m_2}\times \cdots \times \mathbb{P}^{m_s}$ is not isomorphic to  $\mathbb{P}^{n_1}\times \mathbb{P}^{n_2}\times \cdots \times \mathbb{P}^{n_s}$.
\end{lemma}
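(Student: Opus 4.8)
The plan is to separate the two products by an invariant extracted from their cohomology rings. Write $X=\mathbb{P}^{m_1}\times\cdots\times\mathbb{P}^{m_s}$ and $Y=\mathbb{P}^{n_1}\times\cdots\times\mathbb{P}^{n_s}$; since a length-one partition of $n$ is unique, we may assume $s\geq 2$, and all parts are $\geq 1$. First I would recall that the K\"unneth formula together with $H^{\bullet}(\mathbb{P}^m;\mathbb{Z})\cong\mathbb{Z}[x]/(x^{m+1})$ gives a graded ring isomorphism $H^{\bullet}(X;\mathbb{Z})\cong\mathbb{Z}[x_1,\ldots,x_s]/(x_1^{m_1+1},\ldots,x_s^{m_s+1})$ with $\deg x_i=2$, and likewise for $Y$. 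An isomorphism of varieties $X\xrightarrow{\sim}Y$ is in particular a homeomorphism, hence induces a graded ring isomorphism $H^{\bullet}(Y;\mathbb{Q})\xrightarrow{\sim}H^{\bullet}(X;\mathbb{Q})$; restricting to degree $2$ gives a linear isomorphism $\phi\colon H^2(Y;\mathbb{Q})\xrightarrow{\sim}H^2(X;\mathbb{Q})$ of $s$-dimensional vector spaces that is compatible with the full ring structure (one could equally work with the Chow rings).

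Next I would introduce the ring-theoretic invariant $N(\alpha):=\max\{k\geq 0: \alpha^k\neq 0\}$ for $\alpha\in H^2$, which is manifestly preserved: $N(\phi(\beta))=N(\beta)$. Expanding $\alpha=\sum_i a_i x_i$ by the multinomial theorem and using that the monomials $x_1^{k_1}\cdots x_s^{k_s}$ with $0\leq k_i\leq m_i$ form a $\mathbb{Q}$-basis of the ring, one computes $N(\alpha)=\sum_{i:\,a_i\neq 0}m_i$; in particular, with $n=\sum_i m_i$, one has $\alpha^n\neq 0$ exactly when every $a_i\neq 0$. Consequently the set $Z_X:=\{\alpha\in H^2(X;\mathbb{Q}):\alpha^n=0\}$ is precisely the union of the $s$ pairwise distinct coordinate hyperplanes $H_i^X:=\{a_i=0\}$, and the maximum of $N$ on $H_i^X$ equals $\sum_{j\neq i}m_j=n-m_i$. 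The analogous statements hold for $Y$ with each $m_i$ replaced by $n_i$.

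To conclude: since $\phi$ preserves $N$, it preserves the vanishing $\alpha^n=0$, so $\phi(Z_Y)=Z_X$. Both sides are finite unions of pairwise distinct hyperplanes, and a linear isomorphism carries irreducible components to irreducible components, so $\phi$ matches them up --- after renumbering, $\phi(H_i^Y)=H_i^X$ for every $i$. Comparing the (invariant) maximal value of $N$ along corresponding hyperplanes gives $n-n_i=n-m_i$, that is $m_i=n_i$ for all $i$, contradicting the hypothesis that the two partitions are distinct. The only delicate point is this last matching of components --- that a linear isomorphism carrying one union of distinct hyperplanes onto another must pair them componentwise, which uses uniqueness of irreducible components together with $s\geq 2$; everything else is a routine multinomial computation. (Alternatively, one could argue inside $\Pic$: an isomorphism preserves both the nef cone --- whose extremal rays are spanned by the pullbacks $h_i$ of the hyperplane classes of the factors --- and the canonical class $K_X=-\sum_i(m_i+1)h_i$, which again forces $m_i=n_i$ after reordering.)
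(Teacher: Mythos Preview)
Your argument is correct and takes a genuinely different route from the paper. The paper also computes the cohomology rings via K\"unneth, but then invokes positivity: it observes that each pullback $pr_{m_i}^{\ast}\mathcal{O}_{\mathbb{P}^{m_i}}(1)$ is nef but not ample, and argues that an isomorphism of varieties must send these line bundles to the corresponding $pr_{n_j}^{\ast}\mathcal{O}_{\mathbb{P}^{n_j}}(1)$ (essentially because the nef cone, with its extremal rays, is preserved). From there it reads off that $(m_i)$ is a permutation of $(n_j)$ via the ring relations. By contrast, you never leave the graded ring: your nilpotency index $N(\alpha)=\sum_{a_i\neq 0}m_i$ singles out the coordinate hyperplanes in $H^2$ and recovers each $m_i$ as $n$ minus the maximum of $N$ on the $i$-th hyperplane. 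This is more elementary and in fact proves more---it shows the graded $\mathbb{Q}$-cohomology rings are already non-isomorphic, so the two products are not even homotopy equivalent, whereas the paper's positivity argument genuinely uses that the map is an isomorphism of varieties. Your parenthetical alternative (nef cone plus canonical class) is very close in spirit to the paper's approach, differing only in that you would extract the exponents from $K_X$ rather than from the relations $x_i^{m_i+1}=0$.
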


\section{On the diagonal property and the weak point property of a variety}\label{sec: dp and wpp} 
In this section, we recall the notions of the diagonal property and the weak point property of a variety and talk about relations between these two properties.  Moreover, for a smooth projective curve $C$ over $\mathbb{C}$, we recall a couple of relevant results about the variety $\Sym^d(C)$ and the Quot scheme $\Quot^{d}_{\mathcal{O}_C^n}$.   

Let us begin with the precise definitions of the diagonal property and the weak point property of a variety.
\begin{definition}\label{dp}
Let $X$ be a variety over the field of complex numbers.  Then $X$ is said to have the diagonal property if there exists a vector bundle $E\rightarrow X\times X$ of rank equal to the dimension of $X$, and a global section $s$ of $E$ such that the zero scheme $Z(s)$ of $s$ coincides with the diagonal $\Delta_X$ in $X\times X$.  
\end{definition} 
\begin{definition}\label{wpp}
Let $X$ be a variety over the field of complex numbers.  Then $X$ is said to have the weak point property if there exists a vector bundle $F\rightarrow X$ of rank equal to the dimension of $X$, and a global section $t$ of $F$ such that the zero scheme $Z(s)$ of $s$ is a reduced point of $X$.
\end{definition}
\begin{remark}\label{dp implies wpp}
It can be noted immediately that for a variety, having the weak point property is in fact a weaker condition than having the diagonal property.  To prove this precisely, let's stick to the notations of Definition \ref{dp} and \ref{wpp}.  Let us choose a point $x_0\in X$.  Then $Z(s|_{X\times \{x_0\}})=\{x_0\}$.  Therefore, the diagonal property implies the weak point property. 
\end{remark}
We now quickly go through some results related to the diagonal property and the weak point property of two varieties which arise very naturally from a given curve $C$.  To be specific, we look upon the varieties $\Sym^d(C)$ and $\Quot^{d}_{\mathcal{O}_C^n}$.  We mention a couple of results in this context.  These are due to \cite{BS}.
\begin{theorem}\label{symm prod_dp}
Let $C$ be a smooth projective curve over $\mathbb{C}$.  Then, the $d$-th symmetric product $\Sym^d(C)$ of the curve $C$ has the diagonal property for any positive integer $d$.
\end{theorem}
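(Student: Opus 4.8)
The plan is to exploit the description of $\Sym^d(C)$ as the variety of effective divisors of degree $d$ on $C$ (equivalently, of length-$d$ zero-dimensional subschemes, since $C$ is a smooth curve) and to manufacture the required bundle on the self-product out of the two universal divisors. Write $Y=\Sym^d(C)$, so that $\dim Y=d$, and let $\mathcal{D}\subset C\times Y$ denote the universal divisor, which is finite and flat of degree $d$ over $Y$. On the triple product $C\times Y\times Y$ let $\mathcal{D}_1$ and $\mathcal{D}_2$ be the two divisors obtained by pulling $\mathcal{D}$ back along the two projections to $C\times Y$, and let $q\colon C\times Y\times Y\to Y\times Y$ be the projection forgetting the curve factor. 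I would then set
\[
E \;:=\; q_*\!\left(\mathcal{O}_{C\times Y\times Y}(\mathcal{D}_1)\big|_{\mathcal{D}_2}\right).
\]

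First I would check that $E$ is a vector bundle of rank $d=\dim Y$. Since $\mathcal{D}_2\to Y\times Y$ is finite and flat of degree $d$, and $\mathcal{O}(\mathcal{D}_1)|_{\mathcal{D}_2}$ is a line bundle on it, its pushforward along a finite flat morphism of degree $d$ is locally free of rank $d$; over a point $(D_1,D_2)$ the fibre is canonically $H^0\!\left(D_2,\mathcal{O}_C(D_1)|_{D_2}\right)$. Next I would produce the section: the tautological section of $\mathcal{O}(\mathcal{D}_1)$ cutting out $\mathcal{D}_1$ restricts to $\mathcal{D}_2$ and pushes forward to a global section $s$ of $E$, whose value at $(D_1,D_2)$ is the restriction to the length-$d$ scheme $D_2$ of the canonical section of $\mathcal{O}_C(D_1)$. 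The set-theoretic computation is then immediate: $s(D_1,D_2)=0$ precisely when the canonical section of $\mathcal{O}_C(D_1)$ vanishes along all of $D_2$, that is, when $D_2\le D_1$ as effective divisors; because $\deg D_1=\deg D_2=d$ this forces $D_1=D_2$, so the reduced zero locus $Z(s)_{\mathrm{red}}$ equals the diagonal $\Delta$.

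The heart of the argument, and the step I expect to be the main obstacle, is to upgrade this to the scheme-theoretic identity $Z(s)=\Delta$. Since $\operatorname{rank}E=d$ equals the codimension of $\Delta$ in $Y\times Y$, the diagonal already has the expected dimension of a zero locus of a rank-$d$ bundle, so it suffices to prove that $s$ vanishes transversally along $\Delta$. Concretely, at a point $(D,D)$ the normal space $N_{\Delta/Y\times Y,(D,D)}$ is identified with the tangent space $T_DY\cong H^0\!\left(D,\mathcal{O}_C(D)|_D\right)$, which has the same dimension $d$ as the fibre $E_{(D,D)}=H^0\!\left(D,\mathcal{O}_C(D)|_D\right)$. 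I would carry out a local deformation-theoretic computation showing that the linearization of $s$ in the normal direction is, up to a natural isomorphism, the identity map of $H^0\!\left(D,\mathcal{O}_C(D)|_D\right)$, hence an isomorphism; note that on $T\Delta$ the derivative $ds$ vanishes since $s\equiv 0$ along $\Delta$, so $ds$ genuinely descends to the normal bundle. This transversality makes $Z(s)$ smooth of dimension $d$ along $\Delta$, therefore reduced and equal to $\Delta$. The delicate point in this last computation is to match carefully the two appearances of $H^0\!\left(D,\mathcal{O}_C(D)|_D\right)$ --- one as the Zariski tangent space of $\Sym^d(C)$ realized through the deformation of the divisor $D_2$, the other as the fibre of $E$ through restriction of sections --- and to verify that moving $D_2$ infinitesimally away from $D_1$ alters the restricted section by exactly the corresponding tangent vector, which is where the essential geometry of the symmetric product enters.
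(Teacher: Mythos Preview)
The paper gives no argument here; its entire proof is the citation ``See \cite[Theorem~3.1, p.~447]{BS}.''  Your construction of $E=q_*\bigl(\mathcal{O}(\mathcal{D}_1)\big|_{\mathcal{D}_2}\bigr)$ together with the section coming from the tautological section of $\mathcal{O}(\mathcal{D}_1)$ is precisely the bundle and section used by Biswas--Singh, so you have independently reconstructed the cited proof rather than proposed an alternative.  One comment on the step you single out as the main obstacle: the local transversality computation is correct as you outline it (the normal derivative of $s$ at $(D,D)$ is, up to sign, the identity of $H^0\!\left(D,\mathcal{O}_C(D)|_D\right)$), but it can be avoided by a functor-of-points argument.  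For any test scheme $T$, a $T$-point of $Z(s)$ is a pair of relative effective Cartier divisors $D_1,D_2\subset C\times T$, each finite flat of degree $d$ over $T$, with $D_2\subseteq D_1$; the resulting surjection $\mathcal{O}_{D_1}\twoheadrightarrow\mathcal{O}_{D_2}$ of $\mathcal{O}_T$-modules that are locally free of the same rank $d$ is then an isomorphism, so $D_1=D_2$.  This yields $Z(s)=\Delta$ scheme-theoretically without any differentiation.
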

\begin{proof}
See \cite[Theorem 3.1, p. 447]{BS}.
\end{proof}
\begin{theorem}\label{Quot scheme_wpp}
Let $C$ be a smooth projective curve over $\mathbb{C}$. Let $d$ and $n$ be two given positive integer such that $n|d$.  Then the Quot scheme $\Quot^{d}_{\mathcal{O}_C^n}$ parametrizing the torsion quotients of $\mathcal{O}_C^n$ of degree $d$ has the weak point property.    
\end{theorem}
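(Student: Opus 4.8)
The plan is to produce directly, on $Q:=\Quot^{d}_{\mathcal{O}_C^n}$, a vector bundle of rank equal to $\dim Q$ carrying a tautological global section whose zero scheme is a single reduced point; this parallels the construction underlying the weak point property of $\Sym^d(C)$, which is the case $n=1$. Let $0\to\mathcal{S}\to\mathcal{O}_C^n\boxtimes\mathcal{O}_Q\to\mathcal{Q}\to 0$ be the universal exact sequence on $C\times Q$, with $\mathcal{Q}$ flat over $Q$ and fibrewise a torsion sheaf of length $d$ on $C$. First I would recall the standard facts that $Q$ is smooth and projective of pure dimension $nd$ — for a point $[q]$ the obstruction space $\mathrm{Ext}^1_{C}(\mathcal{S}_q,\mathcal{Q}_q)$ vanishes because $\mathcal{S}_q\subset\mathcal{O}_C^n$ is locally free on the curve $C$ and $\mathcal{Q}_q$ has zero-dimensional support, and the tangent space $\mathrm{Hom}_{C}(\mathcal{S}_q,\mathcal{Q}_q)$ has dimension $n\cdot\mathrm{length}(\mathcal{Q}_q)=nd$ — and that $\mathcal{S}$ is locally free on $C\times Q$.

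Next, using $n\mid d$, write $d=nk$, fix $p_0\in C$, set $T_0:=\bigl(\mathcal{O}_C/\mathcal{O}_C(-kp_0)\bigr)^{\oplus n}$, a torsion sheaf of length $d$, and let $\phi_0\colon\mathcal{O}_C^n\twoheadrightarrow T_0$ be the surjection induced in each factor by $\mathcal{O}_C\twoheadrightarrow\mathcal{O}_C/\mathcal{O}_C(-kp_0)$, so that $\ker\phi_0=\mathcal{O}_C(-kp_0)^{\oplus n}$; denote by $[q_0]\in Q$ the corresponding point. Put
\[
F\;:=\;\pi_{Q\,*}\,\mathcal{H}om_{C\times Q}\bigl(\mathcal{S},\,T_0\boxtimes\mathcal{O}_Q\bigr),
\]
$\pi_Q\colon C\times Q\to Q$ being the projection. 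Since $\mathcal{S}$ is locally free and $T_0\boxtimes\mathcal{O}_Q$ is flat over $Q$ and finite over $Q$ (its support is $\{p_0\}\times Q$), cohomology and base change give that $F$ is locally free with fibre $F_{[q]}=\mathrm{Hom}_C(\mathcal{S}_q,T_0)$, of dimension $n\cdot\mathrm{length}(T_0)=nd=\dim Q$. Composing the inclusion $\mathcal{S}\hookrightarrow\mathcal{O}_C^n\boxtimes\mathcal{O}_Q$ with $\phi_0\boxtimes\mathrm{id}_Q$ yields an element of $\mathrm{Hom}_{C\times Q}(\mathcal{S},T_0\boxtimes\mathcal{O}_Q)=H^0(Q,F)$; call it $s$. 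At a point $[q]$ the section $s$ is the homomorphism $\mathcal{S}_q\hookrightarrow\mathcal{O}_C^n\xrightarrow{\phi_0}T_0$, which vanishes exactly when $\mathcal{S}_q\subseteq\ker\phi_0=\mathcal{S}_{q_0}$; comparing the lengths of the cokernels of $\mathcal{S}_q$ and $\mathcal{S}_{q_0}$ in $\mathcal{O}_C^n$ (both equal to $d$) forces $\mathcal{S}_q=\mathcal{S}_{q_0}$, hence $[q]=[q_0]$. Thus $Z(s)$ is set-theoretically the single point $[q_0]$.

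It then remains to verify that $Z(s)$ is reduced there, that is, that $d_{[q_0]}s\colon T_{[q_0]}Q\to F_{[q_0]}$ is an isomorphism; both source and target are canonically $\mathrm{Hom}_C(\mathcal{S}_{q_0},T_0)$ (using $\mathcal{Q}_{q_0}=\mathcal{O}_C^n/\mathcal{S}_{q_0}=T_0$ for the tangent space identification), so it suffices to check that $d_{[q_0]}s=\mathrm{id}$. A tangent vector at $[q_0]$ is a first-order deformation of the subsheaf $\mathcal{S}_{q_0}\subset\mathcal{O}_C^n$, represented by the graph of a lift $\widetilde\psi\colon\mathcal{S}_{q_0}\to\mathcal{O}_C^n$ of a class $\psi\in\mathrm{Hom}_C(\mathcal{S}_{q_0},\mathcal{O}_C^n/\mathcal{S}_{q_0})=\mathrm{Hom}_C(\mathcal{S}_{q_0},T_0)$; evaluating $s$ on this deformation produces, to first order, the homomorphism $\phi_0\circ\widetilde\psi\colon\mathcal{S}_{q_0}\to T_0$, and this equals $\psi$ because $\phi_0$ is precisely the quotient map $\mathcal{O}_C^n\to\mathcal{O}_C^n/\mathcal{S}_{q_0}$. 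Hence $d_{[q_0]}s$ is the identity, $Z(s)$ is a reduced point, and the pair $(F,s)$ witnesses the weak point property of $\Quot^{d}_{\mathcal{O}_C^n}$. (The hypothesis $n\mid d$ enters only in selecting this convenient base point; taking instead $T_0=\mathcal{O}_C/\mathcal{O}_C(-dp_0)$ together with the projection onto the first summand of $\mathcal{O}_C^n$ makes the same argument work for arbitrary $d$ and $n$.)

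The step I expect to be the main obstacle is the last one: pinning down the differential of the tautological section $s$ and identifying it with the identity of $\mathrm{Hom}_C(\mathcal{S}_{q_0},T_0)$. This is a bookkeeping matter in the deformation theory of the Quot scheme, but it must be handled with care, since one has to keep straight the two a priori distinct avatars of $\mathrm{Hom}_C(\mathcal{S}_{q_0},T_0)$ — as the Zariski tangent space of $Q$ at $[q_0]$ and as the fibre of $F$ at $[q_0]$ — together with the identification $\mathcal{Q}_{q_0}=T_0$. By contrast, the local freeness of $\mathcal{S}$ on $C\times Q$ and the base-change statement for $F$ are routine, the former because $\mathcal{Q}$ is $\mathcal{O}_Q$-flat and fibrewise of homological dimension one over $C$, the latter because $T_0\boxtimes\mathcal{O}_Q$ is finite over $Q$.
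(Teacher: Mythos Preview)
Your proof is correct and follows essentially the Biswas--Singh construction that the paper cites (and whose setup is sketched in the Remark immediately after the statement): your base point $\mathcal{O}_C(-kp_0)^{\oplus n}\subset\mathcal{O}_C^n$ corresponds, under the isomorphism $\Quot^d_{\mathcal{O}_C^n}\cong\Quot^d_{\mathcal{O}_C(kp_0)^n}$ given by tensoring subsheaves with $\mathcal{O}_C(kp_0)$, to their base point $\mathcal{O}_C^n\subset\mathcal{O}_C(kp_0)^n$, and your bundle $F$ with its tautological section match theirs under this identification. Your parenthetical observation that the choice $T_0=\mathcal{O}_C/\mathcal{O}_C(-dp_0)$ (with $\phi_0$ the projection onto the first summand followed by the quotient) makes the same argument go through without the hypothesis $n\mid d$ is also correct and yields a mild strengthening of the cited result.
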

\begin{proof}
See \cite[Theorem 2.2, p. 446-447]{BS}.
\end{proof}  
\begin{remark}\label{positivity required in the hypothesis}
Let us discuss about the hypothesis of Theorem \ref{Quot scheme_wpp}.  Firstly, positivity of the integer $n$ is necessary as we are talking about the sheaf $\mathcal{O}_C^n$.  Moreover, if we assume that $d$ is a positive integer and $n|d$, then there exists a positive integer $r$ such that $d=nr$.  The positivity of this integer $r$ is heavily used in the proof.  Indeed, the authors first showed that to prove Theorem \ref{Quot scheme_wpp}, it is enough to show that the Quot scheme $\Quot^{d}_{L^n}$ has the weak point property for some degree $r$ line bundle $L$ over $C$.  Now the line bundle $L$ is taken to be the line bundle $\mathcal{O}_C(rx_0)$, where $x_0\in X$.  Now positivity of $r$ gives the natural inclusion $i:\mathcal{O}_C \hookrightarrow \mathcal{O}_C(rx_0)$.  This in turn gives the following short exact sequence:
\begin{equation}\label{ses_wpp of Quot scheme}
0\rightarrow \mathcal{O}_C^n \rightarrow \mathcal{O}_C(rx_0)^n \rightarrow T \rightarrow 0.
\end{equation}   
Now the torsion sheaf $T$ as in \eqref{ses_wpp of Quot scheme} lies in the sheaf $\Quot^{d}_{{\mathcal{O}_C(rx_0)}^n}$, the sheaf they wanted to work on to prove the required result.  So, positivity of $d$ has a huge role to play in the proof. 
\end{remark}
\begin{remark}
It is worthwhile to note a connection between Theorem \ref{symm prod_dp} \& \ref{Quot scheme_wpp}.  If we take, $r=1$, then Theorem \ref{Quot scheme_wpp} says that for any positive integer $d$, the Quot scheme $\Quot^{d}_{\mathcal{O}_C^n}$ has the weak point property.  As, $\Sym^d(C)\cong \Quot^{d}_{\mathcal{O}_C}$, by Remark \ref{dp implies wpp}, Theorem \ref{Quot scheme_wpp} follows from Theorem \ref{symm prod_dp} for $r=1$ case.
\end{remark}
\section{Higher rank divisors on a curve, corresponding ind-varieties and the diagonal \& the weak point property}
\label{sec: Higher rank divisors}
In this section, we recall the definition of higher rank divisors on a curve, corresponding ind-varieties and quasi-isomorphism between them.  Then we introduce the notion of the diagonal property and the weak point property for an ind-variety in general and prove some results about the ind-varieties of higher rank divisor in particular.   
 
Let us denote by $K$ the field of rational functions on $C$, thought as a constant $\mathcal{O}_C$-module.
\begin{definition}
	A divisor of rank $r$ and degree $n$ over $C$ is a coherent sub $\mathcal{O}_C$-module of $K^{\oplus r}$ having rank $r$ and degree $n$.  This is denoted by $(r,n)$-divisor.
\end{definition}
\begin{remark}
Since we take $C$ to be smooth, these $(r,n)$-divisors coincide with the matrix divisors defined by A. Weil, (cf. \cite{W}).
\end{remark}
Let us denote the set of all $(r,n)$-divisors on $C$ by $\Div^{r,n}$.  Let $D$ be an effective divisor of degree $d$ over $C$.  Then corresponding to $D$, let us define the following subset of $\Div^{r,n}$, denoted by $\Div^{r,n}(D)$ as follows:
\begin{definition}
	$\Div^{r,n}(D):=\{E \in \Div^{r,n} \mid E \subseteq \mathcal{O}_{C}(D)^{\oplus r}\}$.
\end{definition}
Then clearly we have, $\Div^{r,n}=\bigcup_{D\geq 0}\Div^{r,n}(D)$.
%\begin{equation*}
%Div^{r,n}=\bigcup_{D\geq 0}Div^{r,n}(D) .
%\end{equation*}
Also, the elements of $\Div^{r,n}(D)$ can be identified with the rational points of the Quot scheme $\Quot_{\mathcal{O}_C(D)^r}^m$, where $m=r\cdot \deg (D)-n$.  Therefore taking $D=\mathcal{O}_C$, we can say that the elements of $\Div^{r,n}(\mathcal{O}_C)$ can be identified with the rational points of the Quot scheme $\Quot_{\mathcal{O}_C^r}^{-n}$.

Let us now recall what one means by a inductive system of varieties.
\begin{definition}
	An ind-variety $\mathbf{X}=\{X_\lambda,f_{\lambda \mu}\}_{\lambda,\mu \in \Lambda}$ is an inductive system of complex algebraic varieties $X_\lambda$ indexed by some filtered ordered set $\Lambda$.  That is to say, an ind-variety is a collection $\{X_\lambda\}_{\lambda \in \Lambda}$ of complex algebraic varieties, where $\Lambda$ is some filtered ordered set, along with the morphisms $f_{\lambda \mu}: X_{\lambda}\rightarrow X_{\mu}$ of varieties for every $\lambda \leq \mu$ such that the following diagrams commute for every $\lambda \leq \mu \leq \nu$.
\begin{equation*}
\xymatrix{
X_{\lambda} \ar[rd]_{f_{\lambda \nu}}\ar[r]^{f_{\lambda \mu}} & X_{\mu}
		\ar[d]^{f_{\mu \nu}}\\ 
		 & X_{\nu}}
\end{equation*}	
\end{definition}
Taking the indexing set $\Lambda$ to be the set of effective divisors on $C$, we have the inclusion 
\begin{equation}\label{E6}
\Div^{r,n}(D_\alpha)\rightarrow \Div^{r,n}(D_\beta),
\end{equation}
induced by the closed immersion $\mathcal{O}_{C}(D_{\alpha})^{\oplus r}\hookrightarrow \mathcal{O}_{C}(D_{\beta})^{\oplus r}$ for any pair of effective divisors $D_\alpha,D_\beta$ satisfying $D_\alpha\leq D_\beta$.
\begin{definition}\label{Div_ind-variety}
	The ind-variety determined by the inductive system consisting of the varieties $\Div^{r,n}(D)$ and the closed immersions as in (\ref{E6}) is denoted by ${\mathbf{Div}}^{r,n}$.
\end{definition}

Now we are going to consider another ind-variety.  Given any effective divisor $D$ on $C$, we consider a complex algebraic variety $Q^{r,n}(D)$ defined as follows.
\begin{definition}\label{Quot schemes as constituent of ind-variety}
	$Q^{r,n}(D):=\Quot^{n+r\cdot \deg (D)}_{\mathcal{O}_C^r}$.
\end{definition}
Let $D_1$ and $D_2$ be any two effective divisors  with $D_2\geq D_1$.   Denoting $D_2-D_1$ as $D$, we have the following structure map denoted by $\mathcal{O}_C(-D)$.
\begin{equation*}
\mathcal{O}_C(-D): \Quot^{n+r\cdot \deg (D_1)}_{\mathcal{O}_C^r}\rightarrow \Quot^{n+r\cdot \deg (D_2)}_{\mathcal{O}_C^r},
\end{equation*}
where the map $\mathcal{O}_C(-D)$ means tensoring the submodules with $\mathcal{O}_C(-D)$.
Elaborately, let $(\mathcal{F},q) \in \Quot^{n+r\cdot \deg (D_1)}_{\mathcal{O}_C^r}$.  Therefore we have the following exact sequence:
\begin{equation*}
\xymatrix{ 0 \ar[r]& \Ker (q) \ar[r] &\mathcal{O}_C^r\ar[r]^{q} &\mathcal{F}
	\ar[r] &0},
\end{equation*}
where degree of $\mathcal{F}$ is $n+r\cdot \deg (D_1)$ and hence degree of $\Ker (q)$ is $-n-r\cdot \deg (D_1)$.  Tensoring this by $\mathcal{O}_C(-D)$ we get,
\begin{equation*}
\xymatrix{ 0 \ar[r]& \Ker (q)\otimes \mathcal{O}_C(-D) \ar[r] &\mathcal{O}_C(-D)^r\ar[r] &\mathcal{F}\otimes \mathcal{O}_C(-D)
	\ar[r] &0}.
\end{equation*}
Here $\deg(\Ker (q)\otimes \mathcal{O}_C(-D))=r\cdot(\deg (D_1)-\deg (D_2))-n-r\cdot \deg (D_1)=-n-r\cdot \deg (D_2)$.  Now as $\mathcal{O}_C(-D)^r$ sits inside $\mathcal{O}_C^r$, $\Ker (q)\otimes \mathcal{O}_C(-D)$ also sits inside $\mathcal{O}_C^r$.  Therefore we now get the following exact sequence:
\begin{equation*}
\xymatrix{ 0 \ar[r]& \Ker (q)\otimes \mathcal{O}_C(-D) \ar[r] &\mathcal{O}_C^r\ar[r]^{q_1} &\mathcal{F}_1
	\ar[r] &0},
\end{equation*} 
where $\deg(\mathcal{F}_1)=n+r\cdot \deg (D_2)$. Hence, $\mathcal{F}_1 \in \Quot^{n+r\cdot \deg (D_2)}_{\mathcal{O}_C^r}$.  Thus, the map $\mathcal{O}_C(-D): \Quot^{n+r\cdot \deg (D_1)}_{\mathcal{O}_C^r} \rightarrow \Quot^{n+r\cdot \deg (D_2)}_{\mathcal{O}_C^r}$ given by $(\mathcal{F},q) \mapsto (\mathcal{F}_1,q_1)$ is well defined.  Therefore for $D_2\geq D_1$ we have,
\begin{equation}\label{E7}
\mathcal{O}_C(-D):Q^{r,n}(D_1)\rightarrow Q^{r,n}(D_2).
\end{equation}
\begin{definition}\label{Quot_ind-variety}
	The ind-variety determined by the inductive system consisting of the varieties $Q^{r,n}(D)$ and the morphisms as in (\ref{E7}) is denoted by ${\mathbf{Q}}^{r,n}$.
\end{definition}

Let us clarify what we mean by a good enough morphism in the category of ind-varieties.   
\begin{definition}
	Let $\mathbf{X}=\{X_D,f_{DD_1}\}_{D,D_1\in \mathcal{D}}$ and $\mathbf{Y}=\{Y_D,g_{DD_1}\}_{D,D_1\in \mathcal{D}}$ be two inductive system of complex algebraic varieties, where $\mathcal{D}$ is the ordered set of all effective divisors on $C$.  Then by a morphism $\mathbf{\Phi}=\{\alpha,\{\phi_D\}_{D\in \mathcal{D}}\}$ from $\mathbf{X}$ to $\mathbf{Y}$ we mean an order preserving map $\alpha:\mathcal{D}\rightarrow \mathcal{D}$ together with a family of morphisms $ \phi_D: X_D \rightarrow Y_{\alpha(D)}$ satisfying the following commutative diagrams for all $D,D_1\in \mathcal{D}$ with $D\leq D_1$.
	\begin{equation*}
	\label{eq:277}
	\xymatrix{X_D \ar[d]_{f_{DD_1}}\ar[rr]^{\phi_D} && Y_{\alpha(D)}
		\ar[d]^{g_{\alpha(D)\alpha(D_1)}}\\ 
		X_{D_1}\ar[rr]^{\phi_{D_1}}
		&& Y_{\alpha(D_1)}
	}
	\end{equation*}	
\end{definition}
\begin{remark}
Note that $\alpha:\mathcal{D}\rightarrow \mathcal{D}$ being an order preserving map, $D\leq D_1\Rightarrow \alpha(D)\leq \alpha(D_1)$.  Therefore the map $g_{\alpha(D)\alpha(D_1)}:Y_{\alpha(D)}\rightarrow Y_{\alpha(D_1)}$ makes sense.
\end{remark}
\begin{definition}
	Let $\mathbf{X}=\{X_D,f_{DD_1}\}_{D,D_1\in \mathcal{D}}$ and $\mathbf{Y}=\{Y_D,g_{DD_1}\}_{D,D_1\in \mathcal{D}}$ be two inductive system of complex algebraic varieties.  Then a morphism $\mathbf{\Phi}=\{\alpha,\{\phi_D\}_{D\in \mathcal{D}}\}$ from $\mathbf{X}$ to $\mathbf{Y}$ is said to be a quasi-isomorphism if
	\begin{enumerate}[(a)]
		\item $\alpha(\mathcal{D})$ is a cofinal subset of $\mathcal{D}$,
		\item given any integer $n$ there exists $D_n\in \mathcal{D}$ such that for all $D\geq D_n$, $ \phi_D: X_D \rightarrow Y_{\alpha(D)}$ is an open immersion and codimension of $Y_{\alpha(D)}-\phi_D(X_D)$ in $Y_{\alpha(D)}$ is greater than $n$, i.e for $D\gg 0$ the maps $\phi_D: X_D \rightarrow Y_{\alpha(D)}$ are open immersion and very close to being surjective.
	\end{enumerate}
\end{definition}

Now we recall an important theorem which talks about the quasi-isomorphism between the ind-varieties defined in Definition \ref{Div_ind-variety} and \ref{Quot_ind-variety}.

\begin{theorem}\label{T2}
	There is a natural quasi-isomorphism between the ind-varieties ${\mathbf{Div}}^{r,n}$ and ${\mathbf{Q}}^{r,-n}$.
\end{theorem}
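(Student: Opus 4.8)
The plan is to exhibit an explicit morphism of inductive systems $\mathbf{\Phi}=\{\alpha,\{\phi_D\}_{D\in\mathcal{D}}\}$ in which $\alpha=\mathrm{id}_{\mathcal{D}}$ is the identity on the poset $\mathcal{D}$ of effective divisors on $C$, to verify that each component $\phi_D$ is an isomorphism of varieties, and then to deduce formally that $\mathbf{\Phi}$ is a quasi-isomorphism. To build $\phi_D$, fix an effective divisor $D$ and set $m:=r\cdot\deg(D)-n$, so that $m=-n+r\cdot\deg(D)$ and $Q^{r,-n}(D)=\Quot^{m}_{\mathcal{O}_C^{r}}$ by Definition \ref{Quot schemes as constituent of ind-variety}. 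As recalled above, sending a rank-$r$ subsheaf $E\subseteq\mathcal{O}_C(D)^{\oplus r}$ of degree $n$ to the torsion quotient $\mathcal{O}_C(D)^{\oplus r}/E$, which has degree $m$, identifies $\Div^{r,n}(D)$ with $\Quot^{m}_{\mathcal{O}_C(D)^{r}}$. In turn, tensoring quotients by $\mathcal{O}_C(-D)$ is a natural transformation $\Quot^{m}_{\mathcal{O}_C(D)^{r}}\to\Quot^{m}_{\mathcal{O}_C^{r}}$ with two-sided inverse given by tensoring with $\mathcal{O}_C(D)$, hence an isomorphism of schemes; the target degree is unchanged because flatness and fibrewise torsion length are local and unaffected by twisting with a line bundle. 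Composing these two identifications gives $\phi_D\colon\Div^{r,n}(D)\xrightarrow{\ \sim\ }Q^{r,-n}(D)$, which on points sends $E\subseteq\mathcal{O}_C(D)^{\oplus r}$ to $E\otimes\mathcal{O}_C(-D)\subseteq\mathcal{O}_C^{\oplus r}$ together with its cokernel; no choices intervene, which accounts for the word ``natural'', and when $m<0$ both sides are empty while for $m=0$ both reduce to a single reduced point.

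Next I would check compatibility with the structure maps of the two inductive systems. Fix $D_1\le D_2$ in $\mathcal{D}$ and put $D:=D_2-D_1$. Starting from $E\subseteq\mathcal{O}_C(D_1)^{\oplus r}$, the inclusion \eqref{E6} followed by $\phi_{D_2}$ produces $E\otimes\mathcal{O}_C(-D_2)\subseteq\mathcal{O}_C^{\oplus r}$, whereas $\phi_{D_1}$ followed by the map $\mathcal{O}_C(-D)$ of \eqref{E7} — which tensors the kernel with $\mathcal{O}_C(-D)$ — produces $(E\otimes\mathcal{O}_C(-D_1))\otimes\mathcal{O}_C(-D)\subseteq\mathcal{O}_C^{\oplus r}$. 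These agree because $\mathcal{O}_C(-D_1)\otimes\mathcal{O}_C(-D)\cong\mathcal{O}_C(-D_2)$ and because the inclusions $\mathcal{O}_C(-D_2)^{\oplus r}\subseteq\mathcal{O}_C(-D_1)^{\oplus r}\subseteq\mathcal{O}_C^{\oplus r}$ fit together. I would phrase this with flat families over an arbitrary test scheme in place of closed points, where it is entirely formal: both composites are the natural transformation of Quot functors that tensors the universal kernel with the pullback of $\mathcal{O}_C(-D_2)$, and checking this reduces to the displayed identity of line bundles on $C$. Hence $\mathbf{\Phi}=\{\mathrm{id}_{\mathcal{D}},\{\phi_D\}_{D\in\mathcal{D}}\}$ is a morphism from $\mathbf{Div}^{r,n}$ to $\mathbf{Q}^{r,-n}$.

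To conclude that $\mathbf{\Phi}$ is a quasi-isomorphism: condition (a) is automatic, since $\alpha(\mathcal{D})=\mathcal{D}$ is cofinal in itself, and condition (b) holds because each $\phi_D$ is an isomorphism, hence in particular an open immersion whose complement $Q^{r,-n}(D)\setminus\phi_D(\Div^{r,n}(D))$ is empty, and the empty set has codimension exceeding every integer. (The argument in fact produces an isomorphism at every level, which is more than the statement requires; the weaker quasi-isomorphism recorded above is exactly what is needed to transport the diagonal property and the weak point property back and forth between the two ind-varieties.)

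The one step that is not bookkeeping is the scheme-theoretic content buried in the first paragraph: one needs that the bijection $\Div^{r,n}(D)\leftrightarrow\Quot^{m}_{\mathcal{O}_C(D)^{r}}$ and the twist by $\mathcal{O}_C(-D)$ are isomorphisms of schemes, not merely of sets of closed points. The former is the standard kernel/cokernel dictionary for Quot schemes on a smooth projective curve — a coherent subsheaf of a locally free sheaf on $C$ is again locally free, and the rank-$r$ subsheaves of $\mathcal{O}_C(D)^{\oplus r}$ are precisely the kernels of its torsion quotients, compatibly in families — and the latter holds because $\mathcal{F}\mapsto\mathcal{F}\otimes p_{C}^{*}\mathcal{O}_C(-D)$ preserves flatness over the base and relative torsion length for every base scheme. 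Both are routine but worth recording explicitly; after that the proof is just chasing twists around commutative squares.
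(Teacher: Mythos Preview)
Your proof is correct and follows essentially the same approach as the paper's: identify $\Div^{r,n}(D)$ with $\Quot^{m}_{\mathcal{O}_C(D)^r}$ and then twist by $\mathcal{O}_C(-D)$ to land in $Q^{r,-n}(D)=\Quot^{m}_{\mathcal{O}_C^r}$. The paper merely constructs this map at the level of points and defers to \cite{BGL} for the rest, whereas you supply the details it omits---that each $\phi_D$ is in fact an isomorphism, that the squares with the structure maps \eqref{E6} and \eqref{E7} commute, and that conditions (a) and (b) of a quasi-isomorphism follow trivially---so your write-up is more complete than the paper's own sketch.
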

\begin{proof}
	See \cite[Remark, page-647]{BGL}.  Infact, let $D$ be an effective divisor on $C$ of degree $d$. Let $(\mathcal{F},q)\in \Quot_{\mathcal{O}_C(D)^r}^{rd-n}$.  Then we have the following exact sequence.
	\begin{equation*}
	\xymatrix{ 0 \ar[r]& \Ker (q) \ar[r] &\mathcal{O}_C(D)^r\ar[r]^{q} &\mathcal{F}
		\ar[r] &0},
	\end{equation*}
	where $\deg(\mathcal{F})=rd-n$. Tensoring this with $\mathcal{O}_C(-D)$ we get,
	\begin{equation*}
	\xymatrix{ 0 \ar[r]& \Ker (q)\otimes \mathcal{O}_C(-D) \ar[r] &\mathcal{O}_C^r\ar[rr]^{q_1} &&\mathcal{F}\otimes \mathcal{O}_C(-D)
		\ar[r] &0},
	\end{equation*}
	where $\deg(\mathcal{F}\otimes \mathcal{O}_C(-D))=rd-n$.  Hence, $(\mathcal{F}\otimes \mathcal{O}_C(-D),q_1)\in \Quot_{\mathcal{O}_C^r}^{rd-n}.$  So we get a map $\Quot_{\mathcal{O}_C(D)^r}^{rd-n}\rightarrow \Quot_{\mathcal{O}_C^r}^{rd-n}$.  Restricting this map to the rational points of $\Quot_{\mathcal{O}_C(D)^r}^{rd-n}$, we obtain a map $\Div^{r,n}(D)\rightarrow Q^{r,-n}(D)$.  This map in turn will induce the required quasi-isomorphism 
	\begin{equation*}
	{\mathbf{Div}}^{r,n}\rightarrow {\mathbf{Q}}^{r,-n}.
	\end{equation*}  
\end{proof}
\begin{remark}\label{ind-variety of divisors}
By Theorem \ref{T2}, we can interpret ${\mathbf{Q}}^{r,-n}.$ as the ind-variety of $(r,n)$-divisors on $C$.
\end{remark}
Now we are in a stage to describe what we mean by the diagonal property and the weak point property of an ind-variety.  In this regard, we have couple of definitions as follows.  The notion of smoothness of an ind-variety (cf. \cite[\S 2, p. 643]{BGL}) motivates us to define the following two notions relevant to our context. 
\begin{definition}\label{indvariety_dp}
Let $\Lambda$ be a filtered ordered set.  Let $X=\{X_{\lambda}, f_{\lambda \mu}\}_{\lambda, \mu \in \Lambda}$ be an ind-variety.  Then $X$ is said to have the diagonal property (respectively weak point property) if there exists some $\lambda_0\in \Lambda$ such that for all $\lambda\geq \lambda_0$, the varieties $X_{\lambda}$'s have the diagonal property (respectively weak point property).
\end{definition}
%One can analogously define the weak point property of an ind-variety.  To be precise, we have the following definition.
%\begin{definition}\label{indvariety_wpp}
%Let $\Lambda$ be a filtered ordered set.  Let $X=\{X_{\lambda}, f_{\lambda \mu}\}_{\lambda, \mu \in \Lambda}$ be an ind-variety.  Then $X$ is said to have the weak point property if there exists some $\lambda_0\in \Lambda$ such that for all $\lambda\geq \lambda_0$, the varieties $X_{\lambda}$'s have the weak point property.
%\end{definition}

Let us now associate a rational number to a given higher rank divisor.  In fact, this number helps us to find some ind-varieties having the diagonal property and weak point property. 
\begin{definition}\label{slope of higher rank divisor}
For a given $(r,n)$-divisor, the rational number $\tfrac{n}{r}$ is said to its slope.
\end{definition}
 
We now prove a couple of theorems about the diagonal property and weak point property of ind-varieties of $(r,n)$-divisors, when the rational number as in Definition \ref{slope of higher rank divisor} is in fact an integer.
\begin{theorem}\label{integral slope case}
Let $C$ be a smooth projective curve over $\mathbb{C}$.  Also let $r\geq 1$ and $n$ be two integers.  Then the ind-variety of $(r,n)$-divisors having integral slope on $C$ has the weak point property. 
\end{theorem}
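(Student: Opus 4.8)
The plan is to reduce the problem, via the quasi-isomorphism of Theorem \ref{T2}, to a statement about the Quot schemes $\Quot^{d}_{\mathcal{O}_C^r}$ appearing in the inductive system ${\mathbf{Q}}^{r,-n}$, and then to invoke Theorem \ref{Quot scheme_wpp}. First I would recall that by Remark \ref{ind-variety of divisors} the ind-variety of $(r,n)$-divisors on $C$ is ${\mathbf{Q}}^{r,-n}$, whose constituent varieties are $Q^{r,-n}(D) = \Quot^{-n+r\cdot\deg(D)}_{\mathcal{O}_C^r}$ indexed by effective divisors $D$. By Definition \ref{indvariety_dp}, it suffices to exhibit an effective divisor $D_0$ such that for every effective $D \geq D_0$ the variety $Q^{r,-n}(D)$ has the weak point property. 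Writing $d = -n + r\cdot\deg(D)$, this is exactly the Quot scheme $\Quot^{d}_{\mathcal{O}_C^r}$, so by Theorem \ref{Quot scheme_wpp} it has the weak point property as soon as $d$ is a positive integer with $r \mid d$.

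The key observation is that the hypothesis "integral slope" gives precisely the divisibility $r \mid n$: the slope $\tfrac{n}{r}$ is an integer, say $n = rk$ for some integer $k$. Then $d = -n + r\cdot\deg(D) = r(\deg(D) - k)$, which is automatically divisible by $r$, for every choice of $D$. So the divisibility condition of Theorem \ref{Quot scheme_wpp} is satisfied for free; the only remaining requirement is positivity of $d$, i.e. $\deg(D) > k$. Thus I would choose $D_0$ to be any effective divisor with $\deg(D_0) > k$ (for instance $D_0 = m x_0$ with $m = \max(k+1, 1)$ and $x_0 \in C$ any point), and note that for all effective $D \geq D_0$ we have $\deg(D) \geq \deg(D_0) > k$, hence $d = r(\deg(D) - k) > 0$ and $r \mid d$. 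Theorem \ref{Quot scheme_wpp} then applies to give the weak point property of each such $Q^{r,-n}(D)$, and Definition \ref{indvariety_dp} yields the weak point property of the ind-variety.

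I expect no serious obstacle here; the content is essentially bookkeeping to check that the two hypotheses of Theorem \ref{Quot scheme_wpp} — positivity and divisibility — translate correctly under the identification of ${\mathbf{Div}}^{r,n}$ with ${\mathbf{Q}}^{r,-n}$, with the integral-slope assumption doing exactly the work needed for divisibility. The only minor subtlety to handle carefully is a sign/index check: the ind-variety of $(r,n)$-divisors corresponds to ${\mathbf{Q}}^{r,-n}$ (note the sign flip in the second index coming from Theorem \ref{T2} and Definition \ref{Quot schemes as constituent of ind-variety}), so one must be sure that the quantity playing the role of the Quot-scheme degree is $-n + r\cdot\deg(D)$ and not $n + r\cdot\deg(D)$; once that is pinned down, the divisibility by $r$ is immediate from $r \mid n$, and positivity is arranged by taking $D$ large, which is harmless since $D \gg 0$ in the directed system anyway.
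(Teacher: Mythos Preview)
Your proposal is correct and follows essentially the same route as the paper's own proof: identify the ind-variety with ${\mathbf{Q}}^{r,-n}$, use the integral-slope hypothesis to write $n=rk$ so that $Q^{r,-n}(D)=\Quot_{\mathcal{O}_C^r}^{r(\deg(D)-k)}$, choose $D_0$ with $\deg(D_0)>k$ to force positivity, and apply Theorem~\ref{Quot scheme_wpp}. The sign/index bookkeeping you flag is handled exactly as you describe, and there is nothing further to add.
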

\begin{proof}
It can be noted that a $(r,n)$-divisor is of integral slope if and only if $n$ is an integral multiple of $r$, by Definition \ref{slope of higher rank divisor}.  Therefore, the ind-variety ${\mathbf{Div}}^{r,kr}$, or equivalently ${\mathbf{Q}}^{r,-kr}$ by Remark \ref{ind-variety of divisors}, is the ind-variety of higher rank divisors of integral slope.

Let $D$ be an effective divisor of degree $d$ on $C$.  Then we have, $Q^{r,-n}(D)=\Quot_{\mathcal{O}_C^r}^{rd-n}$ by Definition \ref{Quot schemes as constituent of ind-variety}.  Now if $n=rk$ for some integer $k$, then $Q^{r,-rk}(D)=\Quot_{\mathcal{O}_C^r}^{rd-rk}=\Quot_{\mathcal{O}_C^r}^{r(d-k)}$.  Now let's pick an effective divisor $D_0$ of degree $d_0$ satisfying the inequality $d_0>k$.  Then for all $D\geq D_0$ and $n=rk$, we have 
\begin{equation}\label{eqn_1_wpp}
\deg(D)\geq \deg(D_0)=d_0>k, 
\end{equation}
and 
\begin{equation}\label{eqn_2_wpp}
Q^{r,-n}(D)=Q^{r,-rk}(D)=\Quot_{\mathcal{O}_C^r}^{r(\deg(D)-k)}.
\end{equation}
Here $r(\deg(D)-k)$ is a positive integer by \eqref{eqn_1_wpp}.  Therefore, by Theorem \ref{Quot scheme_wpp} and Definition \ref{indvariety_wpp} \& \eqref{eqn_2_wpp}, the ind-variety ${\mathbf{Q}}^{r,-kr}$ has the weak point property.  Hence we have the assertion.  
\end{proof}
\begin{theorem}\label{rank one case}
Let $C$ be a smooth projective curve over $\mathbb{C}$ and $n$ any given integer. Then the ind-variety of $(1,n)$-divisors on $C$ has the diagonal property.
\end{theorem}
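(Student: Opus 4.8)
The plan is to reduce the statement, via Theorem \ref{T2}, to the already-established diagonal property of symmetric powers of $C$ (Theorem \ref{symm prod_dp}). By Remark \ref{ind-variety of divisors}, the ind-variety of $(1,n)$-divisors on $C$ is $\mathbf{Q}^{1,-n}$, whose constituent varieties are
\begin{equation*}
Q^{1,-n}(D) \;=\; \Quot^{\,\deg D - n}_{\mathcal{O}_C}
\end{equation*}
by Definition \ref{Quot schemes as constituent of ind-variety} with $r=1$. The key classical fact I would use is the identification $\Quot^{m}_{\mathcal{O}_C}\cong \Sym^{m}(C)$ for every positive integer $m$: on a smooth curve, a torsion quotient $\mathcal{O}_C\twoheadrightarrow T$ of degree $m$ has kernel an ideal sheaf, necessarily of the form $\mathcal{O}_C(-E)$ for a unique effective divisor $E$ of degree $m$, and this correspondence is in fact an isomorphism of schemes (this is exactly the identification $\Sym^d(C)\cong\Quot^d_{\mathcal{O}_C}$ already recorded in Section \ref{sec: dp and wpp}).

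Granting this, the argument runs as follows. Fix an effective divisor $D_0$ on $C$ with $\deg D_0 = d_0 > n$. Then for every effective divisor $D \geq D_0$ we have $\deg D \geq d_0 > n$, so $m := \deg D - n$ is a positive integer and
\begin{equation*}
Q^{1,-n}(D) \;\cong\; \Sym^{\,\deg D - n}(C).
\end{equation*}
By Theorem \ref{symm prod_dp} the variety $\Sym^{m}(C)$ has the diagonal property, and the diagonal property is obviously preserved under isomorphism of varieties (simply transport the bundle $E\to X\times X$ and its section $s$ through the isomorphism). Hence every $Q^{1,-n}(D)$ with $D\geq D_0$ has the diagonal property, and by Definition \ref{indvariety_dp}, taking $\lambda_0=D_0$, the ind-variety $\mathbf{Q}^{1,-n}$ — equivalently, the ind-variety of $(1,n)$-divisors on $C$ — has the diagonal property, which is the assertion.

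There is essentially no deep obstacle here; the only two points requiring a sentence of care are (i) that $\Quot^{m}_{\mathcal{O}_C}\cong \Sym^{m}(C)$ is an isomorphism of schemes rather than a bare bijection on closed points, and (ii) that the diagonal property transfers along such an isomorphism — both routine. One should also remark that it is harmless that Theorem \ref{T2} supplies only a quasi-isomorphism, not an isomorphism of ind-varieties, between $\mathbf{Div}^{1,n}$ and $\mathbf{Q}^{1,-n}$: by Definition \ref{indvariety_dp} the diagonal property of an ind-variety is a condition on a cofinal tail $\{X_\lambda : \lambda\geq \lambda_0\}$ only, so it is enough to verify it on $\mathbf{Q}^{1,-n}$. (If one prefers to work directly with $\mathbf{Div}^{1,n}$, the same reasoning applies: $\Div^{1,n}(D)$ is identified with $\Quot^{\,\deg D - n}_{\mathcal{O}_C(D)}\cong \Sym^{\,\deg D - n}(C)$ for $\deg D > n$, since a torsion quotient of the line bundle $\mathcal{O}_C(D)$ of length $m$ again corresponds to an effective divisor of degree $m$.)
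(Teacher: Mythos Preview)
Your argument is correct and follows essentially the same route as the paper's own proof: identify $Q^{1,-n}(D)$ with $\Sym^{\deg D - n}(C)$ for $D$ of sufficiently large degree, invoke Theorem \ref{symm prod_dp}, and conclude via Definition \ref{indvariety_dp}. The additional remarks you make about the scheme-level identification and about bypassing the quasi-isomorphism are sound but not needed in the paper's version.
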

\begin{proof}
Let $D$ be an effective divisor of degree $d$ on $C$.  Then we have, $Q^{1,-n}(D)=\Quot_{\mathcal{O}_C}^{d-n}$.  Now let's pick an effective divisor $D_1$ of degree $d_1$ satisfying the inequality $d_1>n$.  Then for all $D\geq D_1$, we have 
\begin{equation}\label{eqn_1_dp}
\deg(D)\geq \deg(D_1)=d_1>n,
\end{equation}
and  
\begin{equation}\label{eqn_2_dp}
Q^{1,-n}(D)=\Quot_{\mathcal{O}_C}^{\deg(D)-n}\cong \Sym^{\deg(D)-n}(C).
\end{equation}
Here $\deg(D)-n$ is a positive integer by \eqref{eqn_1_dp}.  Therefore, by Theorem \ref{symm prod_dp} and Definition \ref{indvariety_dp} \& \eqref{eqn_2_dp}, the ind-variety ${\mathbf{Q}}^{1,-n}$ of all $(1,n)$-divisors has the diagonal property.  
\end{proof}
\begin{remark}
It can be noted a particular case of Theorem \ref{integral slope case}, namely the case $r=1$, follows from Theorem \ref{rank one case} and Remark \ref{dp implies wpp}.
\end{remark}

\section{The diagonal property of the Hilbert scheme associated to a constant polynomial and its good partition}\label{sec: Hilbert scheme}
In this section, we talk about the Hilbert schemes associated to a polynomial and its good partitions.     First we mention the importance of studying such Hilbert schemes and then show that few of these Hilbert schemes satisfy the diagonal property.  Moreover, we provide an upper bound on the number of such Hilbert schemes.

Let $P(t)$ be a polynomial with rational coefficients.  We use the notation $\Hilb^{P}_C$ to denote the Hilbert scheme parametrizing all subschemes of $C$ having Hilbert polynomial $P(t)$.  Let $n$ be a positive integer.  Then interpreting $n$ as a constant polynomial, by $\Hilb^n_C$ we mean the Hilbert scheme parametrizing subschemes of $C$ having Hilbert polynomial $n$.  Let us recall the notion of a good partition of a polynomial and a Hilbert scheme associated to that. 
\begin{definition}\label{gp_defn}
	Let $\underline{P}=(P_i)_{i=1}^s$ be a family of polynomials with rational coefficients.  Then $\underline{P}$ is said to be a good partition of $P$ if $\sum_{i=1}^s P_i=P$ and $\Hilb_C^{P_i}\neq \phi$ for all $i$.
\end{definition} 
\begin{definition}
	The Hilbert scheme associated to a polynomial $P$ and its good partition $\underline{P}$, denoted by $\Hilb_C^{\underline{P}}$ , is defined as $\Hilb^{\underline{P}}_C:=\Hilb^{P_1}_C\times_\mathbb{C}\cdots \times_\mathbb{C} \Hilb^{P_s}_C$.
\end{definition}
\begin{remark}\label{motivation for checking dp for Hilbert schemes}
At this point it is worthwhile to mention the importance of the Hilbert scheme $\Hilb^{\underline{P}}_C$.  Recall that by  $\Quot^{P}_{\mathcal{F}}$ we denote the Quot scheme parametrizing all torsion quotients of $\mathcal{F}$ having having Hilbert polynomial $P(t)$.  We have a decomposition of $\Quot_{\mathcal{O}_C^r}^P$ as follows, whenever $\Quot_{\mathcal{O}_C^r}^P$ is smooth.
	\begin{equation*}\label{E8}
	\Quot_{\mathcal{O}_C^r}^P=\bigsqcup_{\substack{\underline{P}\; such\; that \;\underline{P}\\is \;a\; good\; partition\; of\; P }}\mathcal{S}_{\underline{P}},
	\end{equation*}
	where each $\mathcal{S}_{\underline{P}}$ is smooth, the torus $\mathbb{G}_m^r$-invariant, locally closed and isomorphic to a vector bundle over the scheme $\Hilb^{\underline{P}}_C$, (cf. \cite[p. 610]{B}).  Therefore, the cohomology of $\Quot_{\mathcal{O}_C^r}^P$ can be given by the direct sum of the cohomologies of $\Hilb^{\underline{P}}_C$, where the sum varies over the good partitions of the polynomial $P$.  So to study the cohomology ring $H^{\ast}(\Quot_{\mathcal{O}_C^r}^P)$, it is enough the cohomology rings $H^{\ast}(\Hilb^{\underline{P}}_C)$, $\underline{P}$ being good partition of the polynomial $P$.  Now, to get hold of the cohomology rings $H^{\ast}(\Hilb^{\underline{P}}_C)$, it's nice to get hold of the structure of the Hilbert scheme $\Hilb^{\underline{P}}_C$.  Now, as the diagonal property and the weak point property force strong conditions on the underlying variety (cf. \cite{PSP}), therefore to the study the cohomology of $\Quot_{\mathcal{O}_C^r}^P$ it's reasonable enough to check whether the Hilbert schemes $\Hilb^{\underline{P}}_C$'s posses these properties or not. 
\end{remark}
Remark \ref{motivation for checking dp for Hilbert schemes} motivates us to talk about the diagonal property of the Hilbert schemes associated to a constant polynomial and some particular good partitions of the same.  Towards that, we have the following Lemma.  
\begin{lemma}\label{partition_a good partition}
Let $n$ be a given positive integer.  Then any partition of $n$ is also a good partition of $n$ and vice versa.
\end{lemma}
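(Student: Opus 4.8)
The plan is first to determine which polynomials arise as Hilbert polynomials of closed subschemes of $C$, and then to deduce the statement from an elementary comparison of leading coefficients. Since $C$ is a smooth irreducible projective curve, in particular integral, every closed subscheme $Z\subseteq C$ has dimension at most one. If $\dim Z=1$, then the ideal sheaf of $Z$ is a subsheaf of $\mathcal{O}_C$ vanishing at the generic point of $C$, hence is the zero sheaf, so $Z=C$; in that case the Hilbert polynomial of $Z$ (with respect to the fixed polarisation) is linear with strictly positive leading coefficient, namely the degree of the chosen very ample line bundle. If $\dim Z=0$, then $Z$ is a finite subscheme and its Hilbert polynomial is the constant equal to the length of $Z$, a non-negative integer; conversely, for every integer $m\geq 1$ the scheme $\Hilb_C^m\cong\Sym^m(C)$ is non-empty (this is the identification already used in \eqref{eqn_2_dp}), and $\Hilb_C^0$ is a single reduced point, parametrising the empty subscheme. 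Consequently $\Hilb_C^P\neq\emptyset$ if and only if $P$ is a non-negative integer constant or $P$ equals the Hilbert polynomial $P_C$ of $C$.

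Granting this, the forward implication is immediate: given a partition $(n_1,\ldots,n_s)$ of $n$ into positive integers, regard each $n_i$ as a constant polynomial; then $\sum_{i=1}^s n_i=n$ and $\Hilb_C^{n_i}\cong\Sym^{n_i}(C)\neq\emptyset$ for every $i$, so $(n_1,\ldots,n_s)$ is a good partition of the constant polynomial $n$ in the sense of Definition \ref{gp_defn}.

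For the converse, let $\underline{P}=(P_i)_{i=1}^s$ be a good partition of the constant polynomial $n$. By the classification above, each $P_i$ is either a non-negative integer constant $n_i$ or equals $P_C$. No $P_i$ can equal $P_C$: if exactly $k\geq 1$ of the $P_i$ were equal to $P_C$, the leading coefficient of $\sum_{i=1}^s P_i$ would be $k$ times the degree of the polarisation, which is strictly positive, contradicting $\sum_{i=1}^s P_i=n$ being constant. Hence every $P_i$ is a non-negative integer $n_i$ with $\sum_{i=1}^s n_i=n$; deleting the parts equal to $0$ — which is harmless, since each such part contributes only the trivial factor $\Hilb_C^0$ (a point) to $\Hilb_C^{\underline{P}}$ — yields a partition of $n$. (As usual we regard partitions, and good partitions, up to reordering of the parts.)

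The only substantive point is the classification of one-dimensional subschemes of $C$, where integrality of $C$ is used to force such a subscheme to be all of $C$; once that is in place, the leading-coefficient comparison keeps the positive-dimensional subscheme $C$ out of any good partition of a constant polynomial, and the rest is routine book-keeping about zero parts.
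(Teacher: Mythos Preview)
Your argument is correct and follows the same core idea as the paper---namely, that $\Hilb_C^{P_i}\neq\emptyset$ forces each $P_i$ to be a positive integer---but you are considerably more careful about the converse. The paper's proof of the ``vice versa'' direction is a single sentence invoking the fact that $\Hilb_C^k\neq\emptyset$ iff $k>0$ for integers $k$, without explaining why each $P_i$ must be a constant polynomial in the first place; you fill this gap by classifying all closed subschemes of the integral curve $C$ (either $C$ itself, with linear Hilbert polynomial, or a finite scheme, with constant Hilbert polynomial) and then using a leading-coefficient comparison to exclude $P_C$ from any good partition of a constant. Your treatment of parts equal to zero is likewise a refinement: the paper implicitly takes $\Hilb_C^0=\emptyset$, whereas you allow the empty subscheme and observe that such parts contribute only a trivial point factor to $\Hilb_C^{\underline{P}}$. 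Both routes reach the same conclusion, but yours makes fully transparent the use of Definition~\ref{gp_defn}, in which the $P_i$ are a priori arbitrary rational polynomials rather than integers.
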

\begin{proof}
Let $n$ be a positive integer.  An arbitrary partition of $n$ of length $s$ is given by a $s$-tuple $(n_1,n_2,\ldots,n_s)$ such that $\sum_{i=1}^r n_i=n$ and $n_i>0$ for all $i$.  As $n_i>0$ and $\Hilb^{n_i}_C$ is isomorphic to the moduli space $\Sym^{n_i}(C)$ of effective divisors of degree $n_i$ over $C$, we have $\Hilb^{n_i}_C\neq \emptyset$ for all $i$.  Therefore, by Definition \ref{gp_defn}, the chosen partition of $n$ is a good partition as well.   Converse part follows from the fact that given any given integer $k$, $\Hilb^{k}_C$ is non-empty if and only if $k$ is positive.  
\end{proof} 
 
We now have the following lemma which says that if two varieties have diagonal property, then so does their product.  The statement of the lemma can be found in the literature (cf. \cite[p. 1235]{PSP}, \cite[p. 47]{D}).  The proof though is not available to the best of our knowledge.  Therefore, for the sake of completeness, we briefly sketch the proof of the same.   
\begin{lemma}\label{dp for product}
Let $X_1$ and $X_2$ be two varieties over $\mathbb{C}$ satisfying the the diagonal property.  Then the product variety $X_1\times X_2$ also have the diagonal property. 
\end{lemma}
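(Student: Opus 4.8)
The plan is to take the data witnessing the diagonal property of $X_1$ and $X_2$ separately, pull everything back to $(X_1\times X_2)\times(X_1\times X_2)$ via the appropriate projections, and form a direct sum. Concretely, for $i=1,2$ let $E_i\to X_i\times X_i$ be a vector bundle of rank $\dim X_i$ with a section $s_i$ whose zero scheme is $\Delta_{X_i}$. Write $Y=X_1\times X_2$ and consider the two maps $p_i\colon Y\times Y\to X_i\times X_i$ obtained by projecting $Y\times Y=(X_1\times X_2)\times(X_1\times X_2)$ onto the $i$-th factor of each copy of $Y$; that is, $p_1\bigl((a_1,a_2),(b_1,b_2)\bigr)=(a_1,b_1)$ and $p_2\bigl((a_1,a_2),(b_1,b_2)\bigr)=(a_2,b_2)$. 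Then set $E=p_1^{\ast}E_1\oplus p_2^{\ast}E_2$ on $Y\times Y$ and $s=(p_1^{\ast}s_1,\,p_2^{\ast}s_2)$ a global section of $E$.

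The first routine check is the rank: $\operatorname{rk}E=\operatorname{rk}E_1+\operatorname{rk}E_2=\dim X_1+\dim X_2=\dim Y$, as required by Definition~\ref{dp}. The heart of the argument is the identification of the zero scheme. Since $s=(p_1^{\ast}s_1,p_2^{\ast}s_2)$ and the two summands of $E$ are independent, scheme-theoretically one has $Z(s)=Z(p_1^{\ast}s_1)\cap Z(p_2^{\ast}s_2)$ as closed subschemes of $Y\times Y$. Now $Z(p_i^{\ast}s_i)=p_i^{-1}\bigl(Z(s_i)\bigr)=p_i^{-1}(\Delta_{X_i})$, because forming the zero scheme of a section commutes with pullback along any morphism (the zero scheme is cut out by the ideal generated by the image of $E_i^{\vee}\to\mathcal{O}_{X_i\times X_i}$, and this ideal generates the pullback ideal). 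Hence $Z(s)=p_1^{-1}(\Delta_{X_1})\cap p_2^{-1}(\Delta_{X_2})$, and a point $\bigl((a_1,a_2),(b_1,b_2)\bigr)$ lies in this intersection precisely when $a_1=b_1$ and $a_2=b_2$, i.e. precisely when it lies on $\Delta_Y$. The set-theoretic equality $Z(s)=\Delta_Y$ is therefore immediate; for the scheme-theoretic equality one observes that under the identification $Y\times Y\cong(X_1\times X_1)\times(X_2\times X_2)$ (reordering factors), the subscheme $p_1^{-1}(\Delta_{X_1})\cap p_2^{-1}(\Delta_{X_2})$ is exactly the product subscheme $\Delta_{X_1}\times\Delta_{X_2}$, whose structure sheaf is $\mathcal{O}_{\Delta_{X_1}}\boxtimes\mathcal{O}_{\Delta_{X_2}}$, and this is canonically $\mathcal{O}_{\Delta_Y}$ under $\Delta_Y\cong\Delta_{X_1}\times\Delta_{X_2}$.

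**The main obstacle** I expect is making the scheme-theoretic (as opposed to merely set-theoretic) identification $Z(s)=\Delta_Y$ clean, since a priori an intersection of two zero schemes could carry extra nilpotents. The point to emphasize is that, after the harmless reshuffling of the four factors of $Y\times Y$, the section $s$ is literally an external direct sum $s_1\boxplus s_2$ of sections on the two separate product varieties $X_1\times X_1$ and $X_2\times X_2$, so its zero scheme is the fiber product $Z(s_1)\times_{\mathbb{C}}Z(s_2)=\Delta_{X_1}\times_{\mathbb{C}}\Delta_{X_2}$ with no correction terms; the isomorphism $\Delta_{X_1}\times_{\mathbb{C}}\Delta_{X_2}\cong\Delta_{X_1\times X_2}$ is then just the obvious one coming from $\delta_{X_1}\times\delta_{X_2}=\delta_{X_1\times X_2}$ up to the reordering isomorphism. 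With this observation the proof is complete, and by induction the same conclusion holds for any finite product of varieties with the diagonal property, which is the form in which we will apply it to $\Hilb^{\underline{P}}_C=\Hilb^{P_1}_C\times_{\mathbb{C}}\cdots\times_{\mathbb{C}}\Hilb^{P_s}_C$ in the sequel.
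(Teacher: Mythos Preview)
Your proof is correct and follows exactly the same construction as the paper: the same projections $p_i\colon (X_1\times X_2)\times(X_1\times X_2)\to X_i\times X_i$, the same bundle $E=p_1^{\ast}E_1\oplus p_2^{\ast}E_2$, and the same section $s=(p_1^{\ast}s_1,p_2^{\ast}s_2)$. If anything, you supply more detail than the paper does on the scheme-theoretic identification $Z(s)=\Delta_{X_1\times X_2}$, which the paper simply asserts.
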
 
\begin{proof}
Let $i=1,2$.  Let the dimension of $X_i$ be $n_i$.  As $X_i$ satisfy the diagonal property, by Definition \ref{dp}, there exists a vector bundle $E_i$ over $X_i\times X_i$ of rank $n_i$ and a section $s_i$ of $E_i$ such that the zero scheme $Z(s_i)$ of $s_i$ is the diagonal $\Delta_{X_i}$.  Let $p_i:(X_1\times X_2)\times (X_1\times X_2) \rightarrow X_i\times X_i$ are the projection maps given by $p_i((x_1,x_2,x'_1,x'_2))=(x_i,x'_i)$.  Consider the vector bundle $p_1^{\ast}E_1\oplus p_2^{\ast}E_2$ of rank $n_1+n_2$ over $(X_1\times X_2)\times (X_1\times X_2)$.  Then the zero scheme $Z((p_1^{\ast}s_1, p_2^{\ast}s_2))$ of the section $(p_1^{\ast}s_1, p_2^{\ast}s_2)$ of $p_1^{\ast}E_1\oplus p_2^{\ast}E_2$ is the diagonal $\Delta_{X_1\times X_2}$ of $X_1\times X_2$.  Hence, the assertion follows by Definition \ref{dp}.
\end{proof} 

By multiprojective space, we mean product of projective spaces.  The following lemma says that given two distinct partition of a positive integer $n$, the corresponding multiprojective spaces are not isomorphic.  More precisely, we have the following.
\begin{lemma}\label{non isomorphic multi projective spaces}
Let $n$ be a positive integer.  Let $(m_1,m_2,\ldots,m_s)$ and $(n_1,n_2,\ldots,n_s)$ be two distinct partitions of $n$ of same length $s$.  Then $\mathbb{P}^{m_1}\times 
\mathbb{P}^{m_2}\times \cdots \times \mathbb{P}^{m_s}$ is not isomorphic to  $\mathbb{P}^{n_1}\times \mathbb{P}^{n_2}\times \cdots \times \mathbb{P}^{n_s}$.
\end{lemma}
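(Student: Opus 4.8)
The plan is to argue by contradiction using a topological invariant. Suppose that
\[
\phi\colon \mathbb{P}^{m_1}\times\mathbb{P}^{m_2}\times\cdots\times\mathbb{P}^{m_s}\;\xrightarrow{\,\sim\,}\;\mathbb{P}^{n_1}\times\mathbb{P}^{n_2}\times\cdots\times\mathbb{P}^{n_s}
\]
is an isomorphism of complex varieties. Then $\phi$ is in particular a homeomorphism for the classical topology, so the two sides have the same Betti numbers, hence the same Poincar\'e polynomial. Since $\mathbb{P}^{m}$ has Poincar\'e polynomial $1+t^{2}+t^{4}+\cdots+t^{2m}$ (all odd Betti numbers vanish) and the Poincar\'e polynomial of a product is the product of the Poincar\'e polynomials, substituting $u=t^{2}$ gives the identity
\[
\prod_{i=1}^{s}\bigl(1+u+u^{2}+\cdots+u^{m_i}\bigr)\;=\;\prod_{j=1}^{s}\bigl(1+u+u^{2}+\cdots+u^{n_j}\bigr)
\]
in $\mathbb{Z}[u]$.

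Next I would clear the geometric sums. Multiplying both sides by $(1-u)^{s}$ — this is exactly where the hypothesis that both partitions have the same length $s$ enters — and using $(1-u)(1+u+\cdots+u^{k-1})=1-u^{k}$, the identity above becomes
\[
\prod_{i=1}^{s}\bigl(1-u^{m_i+1}\bigr)\;=\;\prod_{j=1}^{s}\bigl(1-u^{n_j+1}\bigr).
\]
The remaining (and only substantive) step is the elementary fact that a polynomial of the form $\prod_{i=1}^{s}(1-u^{a_i})$ with integers $a_i\geq 1$ determines the multiset $\{a_1,\ldots,a_s\}$. This follows by induction on $s$: the least positive exponent occurring with nonzero coefficient equals $a_{\min}:=\min_i a_i$, and its coefficient is $-\mu$ where $\mu=\#\{\,i:a_i=a_{\min}\,\}$; dividing both sides of the identity by $(1-u^{a_{\min}})^{\mu}$ deletes exactly those $\mu$ equal factors and strictly lowers $s$, so the induction closes. (Equivalently, factoring each $1-u^{a_i}$ into cyclotomic polynomials shows that the multiplicity of $\Phi_d$ in $\prod_i(1-u^{a_i})$ is $\#\{\,i:d\mid a_i\,\}$, which recovers $\{a_i\}$ by M\"obius inversion.) Applying this to both sides of the last identity yields $\{m_1+1,\ldots,m_s+1\}=\{n_1+1,\ldots,n_s+1\}$, hence $\{m_1,\ldots,m_s\}=\{n_1,\ldots,n_s\}$; that is, $(m_1,\ldots,m_s)$ and $(n_1,\ldots,n_s)$ are the same partition of $n$, contradicting the assumed distinctness.

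I do not anticipate a real obstacle here: the topological input is entirely standard, and the combinatorial lemma in the last step — the only part carrying genuine content — is dispatched by the short induction above. For readers who prefer to stay inside algebraic geometry, the same conclusion can be reached by observing that $\phi$ induces an isomorphism of Picard groups taking the nef cone to the nef cone; the nef cone of $\mathbb{P}^{m_1}\times\cdots\times\mathbb{P}^{m_s}$ is the simplicial cone spanned by the $s$ hyperplane classes $H_1,\ldots,H_s$ pulled back from the factors, so $\phi^{\ast}$ permutes $\{H_1,\ldots,H_s\}$, and comparing the anticanonical classes $-K=\sum_i (m_i+1)H_i$ on the two sides forces the multisets of the $m_i$ and of the $n_j$ to agree. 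Either route lands on the same numerical identity, and the lemma follows.
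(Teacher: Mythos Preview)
Your proof is correct, and your primary argument via Poincar\'e polynomials is genuinely different from the paper's. The paper works with the full cohomology ring together with positivity: it identifies the generators $x_i$ with the pullback hyperplane classes, observes that these are exactly the primitive nef-but-not-ample classes (equivalently, the extremal rays of the nef cone), concludes that any isomorphism must permute them, and then reads off the $m_i$ from the nilpotency orders $x_i^{m_i+1}=0$. Your route stays at the level of Betti numbers only---no ring structure, no nef cone---and reduces everything to the combinatorial identity $\prod_i(1-u^{m_i+1})=\prod_j(1-u^{n_j+1})$, which you dispatch cleanly by the minimal-exponent induction (or the cyclotomic factorization). This is more elementary and arguably more robust, since it avoids any appeal to positivity in algebraic geometry; on the other hand, the paper's argument generalizes more readily to products of varieties whose Betti numbers alone might coincide but whose nef cones distinguish them. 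Your closing alternative sketch (nef cone plus comparison of anticanonical classes $-K=\sum_i(m_i+1)H_i$) is essentially the paper's strategy, stated somewhat more precisely than the paper itself does.
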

\begin{proof}
The cohomology ring $H^{\ast}(\mathbb{P}^{m_i},\mathbb{Z})$ of $\mathbb{P}^{m_i}$ is the ring $\tfrac{\mathbb{Z}[x_i]}{\langle x_i^{m_i+1} \rangle}$, where $x_i$ is a generator of $H^2(\mathbb{P}^{m_i},\mathbb{Z})(\simeq \mathbb{Z})$, for all $1\leq i\leq s$.  Similarly,  $H^{\ast}(\mathbb{P}^{n_i},\mathbb{Z})=\tfrac{\mathbb{Z}[y_i]}{\langle y_i^{n_i+1} \rangle}$, for all $1\leq i\leq s$.  Now, by K\"unneth formula, we have 
\begin{equation*}\label{cohomology of multiprojective space}
\begin{split}
H^{\ast}(\mathbb{P}^{m_1}\times 
\mathbb{P}^{m_2}\times \cdots \times \mathbb{P}^{m_s},&\mathbb{Z})=\dfrac{\mathbb{Z}[x_1]}{\langle x_1^{m_1+1} \rangle}\otimes \dfrac{\mathbb{Z}[x_2]}{\langle x_2^{m_2+1} \rangle}\otimes \cdots \otimes \dfrac{\mathbb{Z}[x_s]}{\langle x_s^{m_s+1} \rangle}\\
&=\dfrac{\mathbb{Z}[x_1,x_2,\cdots, x_s]}{\langle x_1^{m_1+1}, x_2^{m_2+1}, \cdots x_s^{m_s+1} \rangle}=M\,(\text{say}).
\end{split}
\end{equation*} 
Similarly, we have $H^{\ast}(\mathbb{P}^{n_1}\times 
\mathbb{P}^{n_2}\times \cdots \times \mathbb{P}^{n_s},\mathbb{Z})=\tfrac{\mathbb{Z}[y_1,y_2,\cdots, y_s]}{\langle y_1^{n_1+1}, y_2^{n_2+1}, \cdots y_s^{n_s+1} \rangle}=N\,(\text{say})$.
Now, for all $1\leq i\leq s$, let 
\begin{equation*}
pr_{m_i}: \mathbb{P}^{m_1}\times 
\mathbb{P}^{m_2}\times \cdots \times \mathbb{P}^{m_s} \longrightarrow \mathbb{P}^{m_i}
\end{equation*}
be the $i$-th projection map.  Then, in the ring $M$, $x_i$ can be interpreted as the first Chern class $c_1(pr_{m_i}^{\ast}(\mathcal{O}_{\mathbb{P}^{m_i}}(1)))$ of the pullback of the hyperplane bundle $\mathcal{O}_{\mathbb{P}^{m_i}}(1)$ on $\mathbb{P}^{m_i}$ via the projection map $pr_{m_i}$.  Moreover, under the identification of the Picard group $\Pic(\mathbb{P}^{m_1}\times 
\mathbb{P}^{m_2}\times \cdots \times \mathbb{P}^{m_s})$ of $\mathbb{P}^{m_1}\times 
\mathbb{P}^{m_2}\times \cdots \times \mathbb{P}^{m_s}$ with direct sum of $s$ copies of $\mathbb{Z}$, the line bundle $pr_{m_i}^{\ast}(\mathcal{O}_{\mathbb{P}^{m_i}}(1))$ is nothing but $(0,0,\ldots,1,\dots,0)$, $1$ being in the $i$-th place.  Therefore, $pr_{m_i}^{\ast}(\mathcal{O}_{\mathbb{P}^{m_i}}(1))$ is globally generated (cf. \cite[Theorem 7.1, p. 150]{Ha}).  Moreover, as $\mathbb{P}^{m_1}\times \mathbb{P}^{m_2}\times \cdots \times \mathbb{P}^{m_s}$, being a projective variety, is complete and therefore $pr_{m_i}^{\ast}(\mathcal{O}_{\mathbb{P}^{m_i}}(1))$ is nef (cf. \cite[Example 1.4.5, p. 51]{La}), but not ample (cf. \cite[Example 7.6.2, p. 156]{Ha}), for all $1\leq i\leq s$.  Following similar notations, $y_i=c_1(pr_{n_i}^{\ast}(\mathcal{O}_{\mathbb{P}^{n_i}}(1)))$, where the bundle $pr_{n_i}^{\ast}(\mathcal{O}_{\mathbb{P}^{n_i}}(1))$ is nef, but not ample.  So, if there exists an isomorphism between $\mathbb{P}^{m_1}\times \mathbb{P}^{m_2}\times \cdots \times \mathbb{P}^{m_s}$ and $\mathbb{P}^{n_1}\times \mathbb{P}^{n_2}\times \cdots \times \mathbb{P}^{n_s}$, then $pr_{m_i}^{\ast}(\mathcal{O}_{\mathbb{P}^{m_i}}(1))$ must go to some $pr_{n_j}^{\ast}(\mathcal{O}_{\mathbb{P}^{n_j}}(1))$ under that isomorphism (cf. \cite[Example 1.4.5, p. 51]{La}).  Therefore, $x_i$ should map to some $y_j$ under the induced isomorphism at the cohomology level, that is to say $(x_1,x_2, \cdots, x_s)=(y_{\sigma(1)},y_{\sigma(2)}, \cdots, y_{\sigma(s)})$, for some $\sigma \in S_s$.  Now, as the partitions $(m_1,m_2,\ldots,m_s)$ and $(n_1,n_2,\ldots,n_s)$ of $n$ are distinct, $m_i\neq n_i$ for some $i$.  So, the rings $M$ and $N$ are not isomorphic.  But that is a contradiction to our assumption that $\mathbb{P}^{m_1}\times 
\mathbb{P}^{m_2}\times \cdots \times \mathbb{P}^{m_s}$ and $\mathbb{P}^{n_1}\times \mathbb{P}^{n_2}\times \cdots \times \mathbb{P}^{n_s}$ are isomorphic.  Hence, the assertion follows.
\end{proof}

Finally, we have the following theorem which says about the diagonal property of the Hilbert schemes associated to a constant polynomial and its good partitions.  Moreover, we provide an upper bound on the number of such Hilbert schemes and prove that the obtained bound is the best possible bound.

\begin{theorem}
Let $C$ be a smooth projective curve over $\mathbb{C}$ and $n$ a positive integer.  Then there are at most as many Hilbert schemes $\Hilb^{\underline{n}}_C$ (upto isomorphism) associated to the constant polynomial $n$ and its good partitions $\underline{n}$ satisfying diagonal property as there are conjugacy classes of the symmetric group $S_n$ of $n$ symbols.  Moreover, this upper bound is achieved when $C$ is of genus $0$ and hence is sharp.
\end{theorem}
\begin{proof}
For a positive integer $n$, a partition of $n$ of length $s$ is given by a $r$-tuple $(n_1,n_2,\ldots,n_s)$ such that $\sum_{i=1}^s n_i=n$ and $n_1\geq n_2\geq \cdots \geq n_s>0$. Then $(n_1,n_2,\ldots,n_s)$ is also a good partition of $n$ by Lemma \ref{partition_a good partition}. Moreover, the associated Hilbert scheme is given by $\Hilb^{n_1}_C\times_\mathbb{C} \Hilb^{n_2}_C\times_\mathbb{C} \cdots \times_\mathbb{C} \Hilb^{n_s}_C$.  As $\Hilb^m_C\cong \Sym^m(C)$ for any positive integer $m$, by Theorem \ref{symm prod_dp} and Lemma \ref{dp for product}, we get that the associated Hilbert scheme $\Hilb^{n_1}_C\times_\mathbb{C} \Hilb^{n_2}_C\times_\mathbb{C} \cdots \times_\mathbb{C} \Hilb^{n_s}_C$ satisfies the diagonal property.  Therefore, given any arbitrary partition of $n$, the associated Hilbert scheme has the diagonal property.  

Now let us take two distinct partition of $n$, say $(n_1,n_2,\ldots,n_s)$ and $(n^{'}_1,n^{'}_2,\ldots,n^{'}_t)$.  Then we have the following two mutually exclusive and exhaustive cases:\\
\textit{First Case :} $s\neq t$\\
In this case, the associated Hilbert schemes $\Hilb^{n_1}_C\times_\mathbb{C} \Hilb^{n_2}_C\times_\mathbb{C} \cdots \times_\mathbb{C} \Hilb^{n_s}_C$ and $\Hilb^{n^{'}_1}_C\times_\mathbb{C} \Hilb^{n^{'}_2}_C\times_\mathbb{C} \cdots \times_\mathbb{C} \Hilb^{n^{'}_t}_C$ are distinct as they can be written as product of different number of $\Hilb^m_C$'s.\\ 
\textit{Second Case :} $s=t$\\
In this case, as $n_i\neq n^{'}_i$ for some $1\leq i \leq s$, therefore $\Hilb^{n_i}_C\neq \Hilb^{n^{'}_i}_C$.  Hence the associated Hilbert schemes $\Hilb^{n_1}_C\times_\mathbb{C} \Hilb^{n_2}_C\times_\mathbb{C} \cdots \times_\mathbb{C} \Hilb^{n_s}_C$ and $\Hilb^{n^{'}_1}_C\times_\mathbb{C} \Hilb^{n^{'}_2}_C\times_\mathbb{C} \cdots \times_\mathbb{C} \Hilb^{n^{'}_s}_C$ are not same as well.
  
Therefore, we conclude that any two distinct partitions of $n$ give us two distinct Hilbert schemes associated to those partitions satisfying the diagonal property.   So, upto isomorphism, there could be at most as many such Hilbert schemes as there are partitions of $n$.  Now as number of conjugacy classes of $S_n$ is equal to the number of partition $p(n)$ of $n$ (cf. \cite[Lemma 2.11.3, p. 89]{H}), the first part of the assertion follows from Lemma \ref{partition_a good partition}.

We now show that the obtained upper bound for number of Hilbert schemes associated to the good partitions of the constant polynomial $n$ satisfying diagonal property is in fact is achieved in genus $0$ case.  Indeed, let us consider $C=\mathbb{P}^1$.  Then, $\Hilb^n_{\mathbb{P}^1}=\Sym^n(\mathbb{P}^1)=\mathbb{P}^n$.  Now let us take two distinct partition of $n$, say $(n_1,n_2,\ldots,n_s)$ and $(n^{'}_1,n^{'}_2,\ldots,n^{'}_t)$.  Then, as before, we have the following two mutually exclusive and exhaustive cases:\\
\textit{First Case :} $s\neq t$\\
In this case, the associated Hilbert schemes $\Hilb^{n_1}_{\mathbb{P}^1}\times_\mathbb{C} \Hilb^{n_2}_{\mathbb{P}^1}\times_\mathbb{C} \cdots \times_\mathbb{C} \Hilb^{n_s}_{\mathbb{P}^1}$ and $\Hilb^{n^{'}_1}_{\mathbb{P}^1}\times_\mathbb{C} \Hilb^{n^{'}_2}_{\mathbb{P}^1}\times_\mathbb{C} \cdots \times_\mathbb{C} \Hilb^{n^{'}_t}_{\mathbb{P}^1}$ are not isomorphic as their Picard groups are not so.  That is,
\begin{equation*}
\begin{split}
\Pic(\Hilb^{n_1}_{\mathbb{P}^1}\times_\mathbb{C} \Hilb^{n_2}_{\mathbb{P}^1}\times_\mathbb{C} \cdots \times_\mathbb{C} \Hilb^{n_s}_{\mathbb{P}^1})&\cong \oplus_{i=1}^s\mathbb{Z}\ncong \oplus_{i=1}^t\mathbb{Z}\\
&=\Pic(\Hilb^{n^{'}_1}_{\mathbb{P}^1}\times_\mathbb{C} \Hilb^{n^{'}_2}_{\mathbb{P}^1}\times_\mathbb{C} \cdots \times_\mathbb{C} \Hilb^{n^{'}_t}_{\mathbb{P}^1}).
\end{split}
\end{equation*}
\textit{Second Case :} $s=t$ \\
In this case, the associated Hilbert schemes $\Hilb^{n_1}_{\mathbb{P}^1}\times_\mathbb{C} \Hilb^{n_2}_{\mathbb{P}^1}\times_\mathbb{C} \cdots \times_\mathbb{C} \Hilb^{n_s}_{\mathbb{P}^1}$ and $\Hilb^{n^{'}_1}_{\mathbb{P}^1}\times_\mathbb{C} \Hilb^{n^{'}_2}_{\mathbb{P}^1}\times_\mathbb{C} \cdots \times_\mathbb{C} \Hilb^{n^{'}_s}_{\mathbb{P}^1}$ are not isomorphic by Lemma \ref{non isomorphic multi projective spaces}.   

Hence the second part of the assertion follows. 
\end{proof}

\section*{Statements and Declarations}
The author declare that there is no potential competing interest.

\section*{Acknowledgements}
The author would like to thank Prof. D. S. Nagaraj and Dr. C. Gangopadhyay for many useful discussions and throughout encouragements.  The author also wishes to thank Indian Institute of Science Education and Research Tirupati for financial support (Award No.-IISER-T/Offer/PDRF/A.M./M/01/2021).

%\bibliographystyle{plain}
%\bibliography{bibliography.bib}

\end{document}